\newtheorem{remark}[theorem]{Remark}
\newtheorem{assumption}[theorem]{Assumption}
\begin{document}
\title{A uniform additive Schwarz preconditioner for the $hp$-version of Discontinuous Galerkin approximations of elliptic problems}

\author{Paola F. Antonietti\footnotemark[2]\ \footnotemark[3] \and Marco Sarti\footnotemark[2]\ \footnotemark[4] \and Marco Verani\footnotemark[2]\ \footnotemark[5] \and Ludmil T. Zikatanov\footnotemark[6]}

\maketitle
\renewcommand{\thefootnote}{\fnsymbol{footnote}}
\footnotetext[2]{MOX-Laboratory for Modeling and Scientific Computing, Dipartimento di Matematica, Politecnico di Milano, Piazza Leondardo da Vinci 32, 20133 Milano, Italy.}
\footnotetext[3]{Email: paola.antonietti@polimi.it}
\footnotetext[4]{Email: marco.sarti@polimi.it}
\footnotetext[5]{Email: marco.verani@polimi.it}
\footnotetext[6]{Department of Mathematics, The Pennsylvania State University, University Park, PA 16802, U.S.A. and Institute of Mathematics and
Informatics, Bulgarian Academy of Sciences, Acad. G. Bonchev str, bl.~8, 1113 Sofia, Bulgaria. Email: ludmil@psu.edu}
\renewcommand{\thefootnote}{\arabic{footnote}}

\begin{abstract}
In this paper we design
and analyze a uniform preconditioner for a class of high order
Discontinuous Galerkin schemes. The preconditioner is based
on a space splitting involving the high order conforming subspace and
results from the interpretation of the problem as a nearly-singular
problem. We show that the proposed preconditioner exhibits spectral
bounds that are uniform with respect to the discretization parameters,
\emph{i.e.}, the mesh size, the polynomial degree and the penalization
coefficient. The theoretical estimates obtained are supported by
several numerical simulations.
\end{abstract}
\begin{keywords} 
discontinuous Galerkin method, high order discretizations, uniform preconditioning.
\end{keywords}

\section{Introduction}
In the last years, the design of efficient solution techniques for the
system of equations arising from Discontinuous Galerkin (DG)
discretizations of elliptic partial differential equations has become
an increasingly active field of research. On the one hand, DG methods
are characterized by a great versatility in treating a variety of
problems and handling, for instance, non-conforming grids and
$hp$-adaptive strategies. On the other hand, the main drawback of DG
methods is the larger number of degrees of freedom compared to
(standard) conforming discretizations. In this respect, the case of
high order DG schemes is particularly representative, since the
corresponding linear system of equations is very ill-conditioned: it
can be proved that, for elliptic problems, the spectral condition
number of the resulting stiffness matrix grows like $h^{-2}p^4$, $h$
and $p$ being the granularity of the underlying mesh and the
polynomial approximation degree, respectively, cf. \cite{AntHou}. As a
consequence, the design of effective tools for the solution of the
linear system of equations arising from high order DG discretizations
becomes particularly challenging. 

In the context of elliptic problems, Schwarz methods for low order DG
schemes have been studied in \cite{FengKarak01}, where overlapping and
non-overlapping domain decomposition preconditioners are considered,
and bounds of $O(H/\delta)$ and $O(H/h)$, respectively, are obtained
for the condition number of the preconditioned operator. Here $H$, $h$
and $\delta$ stand for the granularity of the coarse and fine grids
and the size of the overlap, respectively. Further extensions
including inexact local solvers, and the extension of two-level
Schwarz methods to advection-diffusion and fourth-order problems can
be found in
\cite{LassTos03,FengKarak05,AntoAyu07,AntoAyu08,AntoAyu09,DrySark10,BarkBrennSung11,AntAyuBrenSung12}. 
In the field of Balancing Domain Decomposition (BDD) methods, a number
of results exist in literature: exploiting a Neumann-Neumann type
method, in \cite{DryGalSark07,DryGalSark08} a conforming
discretization is used on each subdomain combined with interior
penalty method on non-conforming boundaries, thus obtaining a bound
for the condition number of the resulting preconditioner of $O((1-\log(H/h))^2)$. In \cite{DioDarm12}, using the unified framework
of~\cite{Arn} a BDDC method is designed and analyzed for a wide range
of DG methods. The \emph{auxiliary space method} (ASM) (see \emph{e.g.},  \cite{Nepo92,GrieOsw95,Xu96,HipXu07}) is employed in the context of $h$-version DG methods to develop, for instance,  the two-level
preconditioners of \cite{Dobretal06} and the multilevel method of
\cite{BriCamPinDahm08}. In both cases a stable splitting for the linear DG
space is provided by a decomposition consisting of a conforming
subspace and a correction, thus obtaining uniformly bounded
preconditioners with respect to the mesh size.
 
All the previous results focus on low order (\emph{i.e.},
linear) DG methods. In the context of preconditioning high order DG
methods we mention \cite{AntHou}, where a class of non-overlapping
Schwarz preconditioners is introduced, and \cite{AntAyuBerPen12},
where a quasi-optimal (with respect to $h$ and $p$) preconditioner is
designed in the framework of substructuring methods for
$hp$-Nitsche-type discretizations. A study of a BDDC scheme in the case of
$hp$-spectral DG methods is addressed in \cite{CanPavPie12}, where the
DG framework is reduced to the conforming one via the ASM.
The ASM framework is employed also in \cite{Brixetal13}, where the high order conforming space
is employed as auxiliary subspace, and a uniform multilevel
preconditioner is designed for $hp$-DG spectral element methods in
the case of locally varying polynomial degree. To the best of our
knowledge, this preconditioner is the only uniform preconditioner
designed for high order DG discretizations. We note that, in the
framework of high order methods, the decomposition involving
a conforming subspace was already employed in the case of a-posteriori
error analysis, see for example \cite{Houetal07,BurErn07,Zhuetal11}. In this paper, we address the issue of preconditioning high order DG methods by exploiting this kind of space splitting based on 
a high order conforming space and a correction. However, in our case the space decomposition is suggested by the
interpretation of the high order DG scheme in terms of a nearly-singular problem, cf. \cite{Leeetal07}.
Even though the space decomposition is similar to that of \cite{Brixetal13}, the preconditioner and the
analysis we present differs considerably since here we employ the abstract framework of subspace correction methods
provided by \cite{XuZik02}. More precisely, we are able to show that a
simple pointwise Jacobi method paired with an overlapping additive Schwarz method
for the conforming subspace, gives uniform convergence with respect to all the discretization parameters, \emph{i.e.}, the mesh size,
the polynomial order and the penalization coefficient appearing in the DG bilinear form.

The rest of the paper is organized as follows. In Section
\ref{sec:theory}, we introduce the model problem and the corresponding
discretization through a class of symmetric DG schemes. Section
\ref{sec:GLL} is devoted to few auxiliary results regarding the
Gauss-Legendre-Lobatto nodes, whose properties are fundamental to
prove the stability of the space decomposition proposed in Section
\ref{sec:decomp}. The analysis of the preconditioner is presented in
Section~\ref{sec:prec} and the theoretical results are supported by
the numerical simulations of Section~\ref{sec:num}.

\section{Model problem and $hp$-DG discretization}
\label{sec:theory}
In this section we introduce the model problem and its discretization
through several Discontinuous Galerkin schemes, see also
\cite{Arn}.\par\medskip Throughout the paper, we will employ the
notation $x\lesssim y$ and $x\gtrsim y$ to denote the inequalities
$x\leq C y$ and $x\geq C y$, respectively, $C$ being a positive
constant independent of the discretization parameters. Moreover, $x\approx y $ will mean that there exist constants
$C_1, C_2>0$ such that $C_1 y \leq x \leq C_2y$. When needed, the constants will be written explicitly.\par\medskip

Given a convex polygonal/polyhedral domain $\Om\in \mathbb{R}^d$, $d = 2,3$, and $f\in L^2(\Om)$, we consider the following weak formulation of the Poisson problem with homogeneous Dirichlet boundary conditions: find $u\in V:= H^{1}_0(\Om)$, such that
\begin{equation}
\int_\Om \nabla u \cdot \nabla v\ dx =\int_\Omega f v\ dx \, \qquad \forall v\in V.
\label{weak}
\end{equation}
Let \mesh denote a conforming quasi-uniform partition of \Om into shape-regular elements \elem of diameter \h[\elem], and set $\h := \max_{\elem\in\mesh} \h[\elem]$. We also assume that each element $\elem\in\mesh$ results from the mapping, through an affine operator $\mathsf{F}_\elem$, of a reference element \elemref, which is the open, unit $d$-hypercube in $\mathbb{R}^d$, $d=2,3$.\par 
We denote by \faceI and \faceB the set of internal and boundary faces (for $d=2$ ``face'' means ``edge'') of \mesh, respectively, and define $\face:=\faceI\cup\faceB$. 
We associate to any $F\in\face$ a unit vector $\n_F$ orthogonal to the face itself and also denote by $\n_{F,\elem}$ the outward normal vector to $F\subset\partial\elem$ with respect to \elem. We observe that for any $F\in\faceB$, $\n_{F,\elem}=\n_F$, since $F$ belongs to a unique element. For any $F\in\faceI$, we assume $\overline{F}=\partial \overline{\elem^+}\cap\partial \overline{\elem^-}$, where
\begin{align}
\elem^+ &:= \{\elem\in\mesh: F\subset\partial \elem,\ \n_F\cdot\n_{F,\elem}>0\},\\ 
\elem^- &:= \{\elem\in\mesh: F\subset\partial \elem,\ \n_F\cdot\n_{F,\elem}<0\}.
\end{align}
For regular enough vector-valued and scalar functions $\boldsymbol{\tau}$ and $v$, we denote by $\boldsymbol{\tau}^\pm$ and $v^\pm$ the corresponding traces  taken from the interior of $\elem^\pm$, respectively, and define the {\it  jumps} and {\it  averages} across the face $F\in\faceI$ as follows
\begin{alignat*}{4}
\jump[\boldsymbol{\tau}] &:= \boldsymbol{\tau}^{+} \cdot \np + \boldsymbol{\tau}^{-} \cdot \nm,\qquad{}& \average[\boldsymbol{\tau}] &:= \frac{\boldsymbol{\tau}^{+}  + \boldsymbol{\tau}^-}{2},\\
\jump[v] &:= v^{+} \np + v^{-} \nm, &\average[v] {}& := \frac{v^+  + v^-}{2},
\end{alignat*}
For $F\in\faceB$, the previous definitions reduce to $\jump[v] := v\n_F$ and $\average[\boldsymbol{\tau}] := \boldsymbol{\tau}$.\par\medskip
We now associate to the partition \mesh, the $hp$-Discontinuous Galerkin finite element space  $V_{hp}$ defined as
\begin{equation}
\label{VDG}
V_{hp}:=\{v\in L^2(\Om):v\circ \mathsf{F}_\elem \in \mathbb{Q}^{p}(\elemref)\quad\forall \elem\in \mesh\},
\end{equation}
with $\mathbb{Q}^p$ denoting the space  of all tensor-product polynomials on \elemref of degree $p>1$ in each coordinate direction. We define the lifting operators $\mcal[R](\boldsymbol{\tau}) := \sum_{F\in\face} r_F(\boldsymbol{\tau})$ and $\mcal[L](v) := \sum_{F\in\faceI} l_F(v)$, where
\begin{alignat*}{4}
r_F:[L^2(F)]^d\rightarrow[V_{hp}]^d,&\quad\int_{\Omega} r_F(\boldsymbol{\tau})\cdot\boldsymbol{\eta}\ dx &&:=-\int_{F} \boldsymbol{\tau}\cdot\average[\boldsymbol{\eta}]\ ds &&&\quad\forall F \in \face.\\
l_F:L^2(F)\rightarrow[V_{hp}]^d,&\quad\int_{\Omega} l_F(v)\cdot\boldsymbol{\eta}\ dx &&:=-\int_{F} v\jump[\boldsymbol{\eta}]\ ds &&& \quad\forall F \in \faceI,
\end{alignat*}
for any $\boldsymbol{\eta}\in [V_{hp}]^d$.\par
We then introduce the DG finite element formulation: find $u\in V_{hp}$ such that
\begin{equation}
\Aa[][u][v]=\int_\Omega f v\ dx \quad \forall v\in V_{hp},
\label{DG}
\end{equation}
with $\Aa: V_{hp}\times V_{hp}\rightarrow \mathbb{R}$ defined as
\begin{align}
\label{bilinearh}
\Aa[][u][v] := &\sum_{\elem\in\mesh}\int_\elem \nabla u \cdot \nabla v\ dx+\sum_{\elem\in\mesh}\int_\elem \nabla u \cdot (\mcal[R](\jump[v])+\mcal[L](\boldsymbol{\beta}\cdot\jump[v]))\ dx\notag\\
&+\sum_{\elem\in\mesh}\int_\elem (\mcal[R](\jump[u])+\mcal[L](\boldsymbol{\beta}\cdot\jump[u]))\cdot\nabla v\ dx+\sum_{F\in \face}\int_{F} \sigma\jump[u]\cdot \jump[v]ds\\
&+\theta\int_\Omega (\mcal[R](\jump[u])+\mcal[L](\boldsymbol{\beta}\cdot\jump[u]))\cdot(\mcal[R](\jump[v])+\mcal[L](\boldsymbol{\beta}\cdot\jump[v]))\ dx\notag,
\end{align}
where $\theta=0$ for the SIPG method of \cite{Arn82} and $\theta = 1$ for the LDG method of \cite{CoShu}. With regard to the vector function $\boldsymbol{\beta}$, we have $\boldsymbol{\beta}=\boldsymbol{0}$ for the SIPG method, while $\boldsymbol{\beta}\in\mathbb{R}^d$ is a uniformly bounded (and possibly null) vector for the LDG method.  The penalization function $\sigma\in L^\infty(\face)$ is defined as
\begin{align}\label{eq:penalty}
&\sigma|_F := 
\alpha\frac{ p^2}{\min(h_{\elem^+},h_{\elem^{-}})},
\quad F\in \faceI,
&\sigma|_F := 
\alpha\frac{ p^2}{h_\elem}
\quad F\in \faceB,
\end{align}
being $\alpha\geq 1$ and $h_{\elem^{\pm}}$ the diameters of the neighboring elements $\elem^{\pm} \in \mesh$ sharing the face $F\in\faceI$.\par 
We endow the DG space $V_{hp}$ with the following norm
\begin{equation}
\normDG[v][2] := \sum_{\elem\in\mesh} \normL[\nabla v][\elem][2]+\sum_{F\in \face}\normL[\sigma^{1/2}\jump[v]][F][2],
\label{DGnorm}
\end{equation}
and state the following result, cf. \cite{HouSchSul,PerSchot,AntHou,StaWihl}.
\begin{lemma}
\label{lem:contcoerc}
The following results hold
\begin{alignat}{2}
\Aa[][u][v]&\lesssim \normDG[u]\normDG[v]\quad &&\forall u,v\in V_{hp}, \label{cont}\\
\Aa[][u][u]&\gtrsim \normDG[u][2] &&\forall u\in V_{hp}.\label{coerc}
\end{alignat}
For the SIPG formulation coercivity holds provided the penalization coefficient $\alpha$ is chosen large enough.
\end{lemma}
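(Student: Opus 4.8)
The proof splits into the continuity bound \eqref{cont} and the coercivity bound \eqref{coerc}, and both rest on one preliminary estimate: a uniform bound on the lifting operators $\mcal[R]$ and $\mcal[L]$ by the jump part of $\normDG$. To obtain it I would test the defining relation of $r_F$ with $\boldsymbol{\eta}=r_F(\boldsymbol{\tau})$, which gives $\normL[r_F(\boldsymbol{\tau})][\Om][2]=-\int_F\boldsymbol{\tau}\cdot\average[r_F(\boldsymbol{\tau})]\,ds$; since $r_F(\boldsymbol{\tau})$, restricted to each of the at most two elements abutting $F$, lies in $\mathbb{Q}^p$, the $hp$-polynomial trace inverse inequality $\normL[w][F][2]\lesssim p^2\h[\elem]^{-1}\normL[w][\elem][2]$ (valid for $w\in\mathbb{Q}^p(\elem)$ and $F\subset\partial\elem$) together with the quasi-uniformity of \mesh yields $\normL[r_F(\boldsymbol{\tau})][\Om]\lesssim p\,\h[F]^{-1/2}\normL[\boldsymbol{\tau}][F]$. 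Summing over faces---each element meeting only boundedly many of them---and arguing the same way for $l_F$, with $\boldsymbol{\beta}$ uniformly bounded, gives
\[
\normL[\mcal[R](\jump[v])][\Om][2]+\normL[\mcal[L](\boldsymbol{\beta}\cdot\jump[v])][\Om][2]\ \lesssim\ \sum_{F\in\face}\frac{p^2}{\h[F]}\normL[\jump[v]][F][2]\ \lesssim\ \frac{1}{\alpha}\sum_{F\in\face}\normL[\sigma^{1/2}\jump[v]][F][2]\,,
\]
the last step being just the definition \eqref{eq:penalty} of $\sigma$; since $\alpha\ge 1$, the right-hand side is $\le\normDG[v][2]$.

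For \eqref{cont} I would bound the five terms of \eqref{bilinearh} one at a time. Writing $\nabla_h$ for the element-wise gradient, so that $\normL[\nabla_h v][\Om][2]=\sum_{\elem\in\mesh}\normL[\nabla v][\elem][2]$, the volume term and the penalty term are controlled directly by Cauchy--Schwarz and the definition \eqref{DGnorm} of $\normDG$; the two consistency terms are bounded by $\normL[\nabla_h u][\Om]\big(\normL[\mcal[R](\jump[v])][\Om]+\normL[\mcal[L](\boldsymbol{\beta}\cdot\jump[v])][\Om]\big)$ and its symmetric analogue, after which the lifting estimate above closes the argument; the LDG stabilization term (present only when $\theta=1$) is handled in exactly the same way. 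Summing produces $\Aa[][u][v]\lesssim\normDG[u]\normDG[v]$ with a constant independent of $\h$, $p$ and $\alpha$.

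For \eqref{coerc} I would set $L_u:=\mcal[R](\jump[u])+\mcal[L](\boldsymbol{\beta}\cdot\jump[u])$ and note that
\[
\Aa[][u][u]=\normL[\nabla_h u][\Om][2]+2\int_\Om\nabla_h u\cdot L_u\,dx+\sum_{F\in\face}\normL[\sigma^{1/2}\jump[u]][F][2]+\theta\,\normL[L_u][\Om][2]\,.
\]
In the LDG case $\theta=1$, the first, second and last terms combine into the perfect square $\normL[\nabla_h u+L_u][\Om][2]$, so $\Aa[][u][u]=\normL[\nabla_h u+L_u][\Om][2]+\sum_{F\in\face}\normL[\sigma^{1/2}\jump[u]][F][2]$; writing $\nabla_h u=(\nabla_h u+L_u)-L_u$, the triangle inequality and the lifting estimate---which, because $\alpha\ge 1$, bounds $\normL[L_u][\Om][2]$ by a constant times $\sum_{F\in\face}\normL[\sigma^{1/2}\jump[u]][F][2]\le\Aa[][u][u]$---recover $\normDG[u][2]\lesssim\Aa[][u][u]$. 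In the SIPG case $\theta=0$ (so $\boldsymbol{\beta}=\boldsymbol{0}$, hence $L_u=\mcal[R](\jump[u])$) I would instead apply Young's inequality, $2\int_\Om\nabla_h u\cdot L_u\,dx\ge-\tfrac12\normL[\nabla_h u][\Om][2]-2\normL[\mcal[R](\jump[u])][\Om][2]$, and then the lifting estimate $\normL[\mcal[R](\jump[u])][\Om][2]\le C_\star\alpha^{-1}\sum_{F\in\face}\normL[\sigma^{1/2}\jump[u]][F][2]$, obtaining $\Aa[][u][u]\ge\tfrac12\normL[\nabla_h u][\Om][2]+(1-2C_\star\alpha^{-1})\sum_{F\in\face}\normL[\sigma^{1/2}\jump[u]][F][2]$; choosing $\alpha>2C_\star$ makes the second coefficient positive and yields \eqref{coerc}, which is precisely the proviso recorded in the statement.

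The only genuinely delicate point---and the step I would be most careful about---is the lifting estimate with the sharp scaling $p^2/\h[F]$ matching the penalty \eqref{eq:penalty}: this is where the $hp$-polynomial trace inverse inequality must be used with its correct constant (linear in $p$), since a weaker $p$-dependence there would make the final constants deteriorate with the polynomial degree. Everything else is routine bookkeeping with the Cauchy--Schwarz and Young inequalities.
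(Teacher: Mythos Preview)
The paper does not actually prove this lemma: it is stated with the remark ``cf.\ \cite{HouSchSul,PerSchot,AntHou,StaWihl}'' and no argument is supplied. Your proof is correct and is essentially the standard one found in that literature: the $hp$-polynomial trace inverse inequality (the same estimate the paper later records as Lemma~\ref{lem:trace_invtrace}) delivers the lifting bound with the sharp $p^2/h$ scaling matching the penalty~\eqref{eq:penalty}, after which continuity is Cauchy--Schwarz and coercivity follows via the perfect-square identity for LDG and via Young's inequality with the large-$\alpha$ proviso for SIPG.
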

From Lemma \ref{lem:contcoerc} and using the Poincar\`e inequality for piecewise $H^1$ functions of \cite{Bren03}, the following spectral bounds hold, cf. \cite{AntHou}.
\begin{lemma}
\label{lem:eigA}
For any $u\in V_{hp}$ it holds that
\begin{equation}
\label{eigA}
\sum_{\elem\in\mesh}\normL[u][\elem][2]\lesssim \Aa[][u][u] \lesssim \sum_{\elem\in\mesh}\alpha\frac{p^4}{h_\elem^{2}}\normL[u][\elem][2].
\end{equation}
\end{lemma}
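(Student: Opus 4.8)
The plan is to obtain the two bounds separately, each by combining Lemma~\ref{lem:contcoerc} with one additional ingredient: for the lower bound, the Poincar\'e inequality for piecewise $H^1$ functions of \cite{Bren03}; for the upper bound, the standard $hp$-inverse and $hp$-trace inequalities for tensor-product polynomials.

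For the lower bound, I would start from the coercivity estimate \eqref{coerc}, which gives $\Aa[][u][u]\gtrsim\normDG[u][2]$, and in particular $\Aa[][u][u]$ controls both $\sum_{\elem\in\mesh}\normL[\nabla u][\elem][2]$ and $\sum_{F\in\face}\normL[\sigma^{1/2}\jump[u]][F][2]$. Since $\alpha\geq 1$ and $p>1$, the definition \eqref{eq:penalty} together with the shape-regularity and quasi-uniformity of $\mesh$ yields $\sigma|_F\gtrsim h_F^{-1}$ on every $F\in\face$, where $h_F$ is the diameter of $F$; hence $\sum_{F\in\face}\normL[\sigma^{1/2}\jump[u]][F][2]\gtrsim\sum_{F\in\face}h_F^{-1}\normL[\jump[u]][F][2]$. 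The Poincar\'e inequality for piecewise $H^1$ functions then gives $\sum_{\elem\in\mesh}\normL[u][\elem][2]=\normL[u][][2]\lesssim\sum_{\elem\in\mesh}\normL[\nabla u][\elem][2]+\sum_{F\in\face}h_F^{-1}\normL[\jump[u]][F][2]\lesssim\normDG[u][2]\lesssim\Aa[][u][u]$, with a constant depending only on $\Om$ and on the shape-regularity of $\mesh$ but not on $h$, $p$, or $\alpha$. This is precisely the first inequality in \eqref{eigA}.

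For the upper bound, continuity \eqref{cont} gives $\Aa[][u][u]\lesssim\normDG[u][2]$, so by \eqref{DGnorm} it suffices to bound $\sum_{\elem\in\mesh}\normL[\nabla u][\elem][2]$ and $\sum_{F\in\face}\normL[\sigma^{1/2}\jump[u]][F][2]$ by $\sum_{\elem\in\mesh}\alpha p^4 h_\elem^{-2}\normL[u][\elem][2]$. For the volume term I would apply the $hp$-inverse inequality $\normL[\nabla v][\elem]\lesssim p^2 h_\elem^{-1}\normL[v][\elem]$, valid for $v\in\mathbb{Q}^p$ on the affine, shape-regular element $\elem$ of diameter $h_\elem$, which after squaring and summing gives exactly the desired term. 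For the jump term, writing $\jump[u]$ on a face $F$ in terms of the (at most two) element traces $u^\pm$, using the $hp$-trace inequality $\normL[v][F][2]\lesssim p^2 h_\elem^{-1}\normL[v][\elem][2]$ for $v\in\mathbb{Q}^p$, together with $\sigma|_F\approx\alpha p^2 h_F^{-1}$ and $h_F\approx h_{\elem^{\pm}}$ from quasi-uniformity, yields $\normL[\sigma^{1/2}\jump[u]][F][2]\lesssim\alpha p^4 h_\elem^{-2}\normL[u][\elem][2]$ summed over the neighbouring elements; summing over all faces and using that each element abuts a bounded number of faces completes the bound. The bookkeeping with the scaling factors is routine; the only genuinely delicate point is to observe that the constant in the Poincar\'e inequality of \cite{Bren03} is independent of $p$ (it is a statement about arbitrary piecewise-$H^1$ functions and never sees the polynomial degree), so that the lower bound is truly $(h,p,\alpha)$-independent, while all the growth in $p$ and $\alpha$ in \eqref{eigA} is produced solely by the inverse and trace inequalities and by the definition \eqref{eq:penalty} of $\sigma$, exactly as in \cite{AntHou}.
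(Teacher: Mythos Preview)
Your proposal is correct and follows precisely the approach indicated by the paper, which does not give a detailed proof but only states that the bounds follow from Lemma~\ref{lem:contcoerc} combined with the Poincar\'e inequality for piecewise $H^1$ functions of \cite{Bren03}, referring to \cite{AntHou} for details. You have fleshed out exactly these ingredients: coercivity plus Brenner's Poincar\'e inequality for the lower bound, and continuity plus the $hp$-inverse and $hp$-trace inequalities (the latter being Lemma~\ref{lem:trace_invtrace}\eqref{trace} in the paper) together with the definition \eqref{eq:penalty} of $\sigma$ for the upper bound.
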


\section{Gauss-Legendre-Lobatto nodes and quadrature rule}
\label{sec:GLL}
In this section we provide some details regarding the choice of the basis functions spanning the space $V_{hp}$ and the corresponding degrees of freedom. On the reference $d$-hypercube $[-1,1]^d$, we choose the basis obtained by the tensor product of the one-dimensional Lagrange polynomials on the reference interval $[-1,1]$, based on Gauss-Legendre-Lobatto (GLL) nodes. We denote by \nodesIref (\nodesBref) the set of interior (boundary) nodes of \elemref, and define $\nodesref := \nodesIref\cup\nodesBref$. The analogous sets in the physical frame are denoted by \nodesI, \nodesB and \nodes, where any ${\xi_p}\in\nodes$ is obtained by applying the linear mapping $\mathsf{F}_\elem:\elemref\rightarrow\elem$ to the corresponding $\hat{\xi}_p\in\nodesref$. The choice of GLL points as degrees of freedom allow us to exploit the properties of the associated quadrature rule. We recall that, given $(p+1)^d$ GLL quadrature nodes $\{\hat{\xi}_p\}$ and weights $\{\hat{\mathsf{w}}_{\xi_p}\}$, we have
\begin{equation}
\sum_{\hat{\xi}_p\in\nodesref}v(\hat{\xi}_p)\hat{\mathsf{w}}_{\xi_p} = \int_{\elemref} v\ dx\qquad \forall v\in \mathbb{Q}^{2p-1}(\elemref),
\end{equation}
which implies that
\begin{equation}
\sum_{\hat{\xi}_p\in\nodesref}v(\hat{\xi}_p)^2\hat{\mathsf{w}}_{\xi_p} \neq \int_{\elemref} v^2\ dx\qquad \forall v\in \mathbb{Q}^{p}(\elemref).
\end{equation}
However, by defining, for $v\in \mathbb{Q}^{p}(\elemref)$, the following norm
\begin{equation}
\label{GLLnorm}
\|v\|_{0,p,\elemref}^2 := \sum_{{\xi_p}\in\nodesref}v(\hat{\xi}_p)^2\hat{\mathsf{w}}_{\xi_p},
\end{equation}
it can be proved that 
\begin{equation}
\label{GLLequiv}
\|v\|_{0,p,\elemref}^2\approx \normL[v][\elemref][2],
\end{equation}
cf. \cite[Section~5.3]{Canetal06}. The same result holds for the physical frame \elem, \emph{i.e.}, $\|v\|_{0,p,\elem}^2\approx \normL[v][\elem][2]$.\par
Considering the Lagrange basis $\{\phi_{\xi_p}\}$, $\xi_p\in \bigcup_{\elem\in\mesh}\nodes$, we can write any $v\in V_{hp}$ as
\begin{equation}
\label{vdecomp}
v=\sum_{\elem\in\mesh}\sum_{{\xi_p}\in\nodes} v(\xi_p)\phi_{\xi_p} = \sum_{\elem\in\mesh}\sum_{{\xi_p}\in\nodes} v^{\xi_p}, 
\end{equation}
where we note that $v^{\xi_p} = v(\xi_p)\phi_{\xi_p}$.
\begin{lemma}
\label{lem:massequiv}
For any $v\in V_{hp}$, given the decomposition \eqref{vdecomp}, the following equivalence holds
\begin{equation}
\label{massequiv}
\normL[v][\Om][2]\approx \sum_{\elem\in\mesh}\sum_{{\xi_p}\in\nodes}\normL[v^{\xi_p}][\elem][2].
\end{equation}
\end{lemma}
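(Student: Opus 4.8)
The plan is to reduce \eqref{massequiv} to an elementwise estimate, transport that estimate to the reference element via the affine map $\mathsf{F}_\elem$, and there conclude from the Gauss--Legendre--Lobatto equivalence \eqref{GLLequiv}. First, since $V_{hp}$ is a discontinuous space, the Lagrange basis $\{\phi_{\xi_p}\}$ is element-local: each $\phi_{\xi_p}$, $\xi_p\in\nodes$, is supported on the single element $\elem$ (a node lying on an internal face carries one basis function per adjacent element). Hence $v^{\xi_p}=v(\xi_p)\phi_{\xi_p}$ vanishes outside $\elem$, so $\normL[v][\Om][2]=\sum_{\elem\in\mesh}\normL[v][\elem][2]$ and $\normL[v^{\xi_p}][\Om][2]=\normL[v^{\xi_p}][\elem][2]$, and it suffices to prove the elementwise equivalence $\normL[v][\elem][2]\approx\sum_{\xi_p\in\nodes}\normL[v^{\xi_p}][\elem][2]$ with constants independent of $\elem$, $\h[\elem]$ and $p$.

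Setting $\hat v:=v\circ\mathsf{F}_\elem\in\mathbb{Q}^p(\elemref)$ and using that $\mathsf{F}_\elem$ is affine (so $|\det\mathsf{J}_\elem|$ is constant and cancels from both sides of the elementwise estimate) together with $v^{\xi_p}\circ\mathsf{F}_\elem=\hat v(\hat\xi_p)\hat\phi_{\hat\xi_p}$, the elementwise equivalence is equivalent to the reference-element statement
\begin{equation}
\normL[\hat v][\elemref][2]\approx\sum_{\hat\xi_p\in\nodesref}\hat v(\hat\xi_p)^2\,\normL[\hat\phi_{\hat\xi_p}][\elemref][2].
\end{equation}
To establish this I would apply \eqref{GLLequiv} twice. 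Applied to $\hat v$ it gives $\normL[\hat v][\elemref][2]\approx\|\hat v\|_{0,p,\elemref}^2=\sum_{\hat\xi_p\in\nodesref}\hat v(\hat\xi_p)^2\,\hat{\mathsf{w}}_{\xi_p}$. The decisive observation is that \eqref{GLLequiv} may also be applied to a single basis function $\hat\phi_{\hat\xi_p}\in\mathbb{Q}^p(\elemref)$: by the interpolation property of the Lagrange basis, $\hat\phi_{\hat\xi_p}$ equals $1$ at $\hat\xi_p$ and $0$ at every other GLL node, whence $\|\hat\phi_{\hat\xi_p}\|_{0,p,\elemref}^2=\hat{\mathsf{w}}_{\xi_p}$ and therefore $\normL[\hat\phi_{\hat\xi_p}][\elemref][2]\approx\hat{\mathsf{w}}_{\xi_p}$, with constants uniform in $p$ and in the node $\hat\xi_p$. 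Substituting this termwise bound (all summands being nonnegative) and combining the two equivalences yields
\begin{equation}
\sum_{\hat\xi_p\in\nodesref}\hat v(\hat\xi_p)^2\,\normL[\hat\phi_{\hat\xi_p}][\elemref][2]\approx\sum_{\hat\xi_p\in\nodesref}\hat v(\hat\xi_p)^2\,\hat{\mathsf{w}}_{\xi_p}=\|\hat v\|_{0,p,\elemref}^2\approx\normL[\hat v][\elemref][2],
\end{equation}
which is the desired reference-element equivalence; undoing the affine map and summing over $\elem\in\mesh$ gives \eqref{massequiv}.

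The step I expect to be the crux is the uniformity of the constants in $\normL[\hat\phi_{\hat\xi_p}][\elemref][2]\approx\hat{\mathsf{w}}_{\xi_p}$: this relies on \eqref{GLLequiv} in its $p$-robust form (cf. \cite[Section~5.3]{Canetal06}), so that the node-by-node estimate can be summed over $\nodesref$ without any deterioration as $p\to\infty$, and it is precisely here that the tensor-product structure of the GLL nodes, weights and Lagrange basis enters. Everything else is routine bookkeeping: the shape-regular affine maps $\mathsf{F}_\elem$ transfer the constants to the physical elements without introducing any dependence on $\h[\elem]$, which is why the final equivalence is uniform in the mesh size. Alternatively, one may run the whole argument directly on the physical element, using the physical-frame version $\|v\|_{0,p,\elem}^2\approx\normL[v][\elem][2]$ of \eqref{GLLequiv} quoted in the text together with the physical GLL weights; the two proofs are identical up to the constant Jacobian factor.
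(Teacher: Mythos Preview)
Your proof is correct and follows essentially the same route as the paper: reduce to a single element, use the GLL norm equivalence \eqref{GLLequiv} on $v$ to pass to $\sum_{\xi_p} v(\xi_p)^2\mathsf{w}_{\xi_p}$, then use it again on each $v^{\xi_p}$ (equivalently, on each $\phi_{\xi_p}$) to identify $\mathsf{w}_{\xi_p}$ with $\normL[v^{\xi_p}][\elem][2]$ up to uniform constants. The only cosmetic difference is that the paper works directly in the physical frame (as you yourself note in your last paragraph), whereas you pass through the reference element; the arguments are otherwise identical.
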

\begin{proof}
The proof can be restricted to the case of a single element $\elem\in\mesh$. We write $v\in V_{hp}$ as in \eqref{vdecomp}, and observe that 
$$\|v^{\xi_p}\|_{0,p,\elem}^2 = \sum_{\xi_p'\in\nodes}v^{\xi_p}(\xi_p')^2\mathsf{w}_{\xi_p}=v^{\xi_p}({\xi_p})^2\mathsf{w}_{\xi_p},$$
hence, by \eqref{GLLequiv}, 
\begin{align}
\normL[v][\elem][2] \approx& \sum_{{\xi_p}\in\nodes}v({\xi_p})^2\mathsf{w}_{\xi_p} =  \sum_{{\xi_p}\in\nodes}v^{\xi_p}({\xi_p})^2\mathsf{w}_{\xi_p}\\ 
=& \sum_{{\xi_p}\in\nodes}\|v^{\xi_p}\|_{0,p,\elem}^2 \approx  \sum_{{\xi_p}\in\nodes}\normL[v^{\xi_p}][\elem][2],
\end{align}
and the thesis follows summing over all $\elem\in\mesh$.
\end{proof}

\section{Space decomposition for $hp$-DG methods}
\label{sec:decomp}
The design of our preconditioner is based on a two-stage space
decomposition: we first split the high order DG space as
$V_{hp}=V_{hp}^B+ V_{hp}^C$, with $V_{hp}^B$ denoting a proper subspace of $V_{hp}$, to be defined later, and $V_{hp}^C$ denoting the high order
conforming subspace. As a second step, both spaces are further
decomposed to build two corresponding additive Schwarz methods in each
of the subspaces. The final preconditioner on $V_{hp}$ is then
obtained by combining the two subspace preconditioners.
The first space splitting is suggested by the interpretation of the
$hp$-DG formulation \eqref{DG} as a nearly-singular problem. To
present the motivation behind this choice, we briefly introduce the
theoretical framework of \cite{Leeetal07} regarding space
decomposition methods for this class of equations. Given a finite
dimensional Hilbert space $V$, we consider the following problem: find
$u\in V$ such that
\begin{equation}
\label{nearly}
Au=(A_0 + \epsilon A_1) u = f,
\end{equation}
where $A_0$ is symmetric and positive semi-definite and $A_1$ is symmetric and positive definite. As a consequence, if $\epsilon=0$, the problem is singular, but here we are interested in the case $\epsilon>0$ (with $\epsilon$ small), \emph{i.e.}, \eqref{nearly} is nearly-singular. In general, the conditioning of problem \eqref{nearly} degenerates for decreasing $\epsilon$, and this affects the performance of standard preconditioned iterative methods, unless proper initial guess are chosen. In the framework of space decomposition methods, in order to obtain a $\epsilon$-uniform preconditioner, a key assumption on the space splitting $V_{hp} = \sum_{i=1}^N V_i$ is needed. 
\begin{assumption}[\cite{Leeetal07}]
\label{A1}
The decomposition $V_{hp} = \sum_{i=1}^N V_i$ satisfies $$\ker(A_0) = \sum_{i=1}^N (V_i\cap\ker(A_0)),$$
where $\ker(A_0)$ is the kernel of $A_0$.
\end{assumption}\par\medskip
We now turn to our DG framework, and show that a high order DG formulation can be indeed read as a nearly-singular problem with a suitable choice of $\epsilon$. For the sake of simplicity, and without any loss of generality, we retrieve equation \eqref{nearly} working directly on a bilinear form that is spectrally equivalent to \Aa. 
To this aim, let  the bilinear forms ${\mcal[A]}_\nabla(\cdot,\cdot)$, ${\mcal[A]}_J(\cdot,\cdot)$ and $\widetilde{\mcal[A]}(\cdot,\cdot)$ be defined as
\begin{equation}
\begin{aligned}
{\mcal[A]}_\nabla(u,v) &:= \sum_{\elem\in\mesh}\int_\elem \nabla u \cdot \nabla v\ dx,\\
{\mcal[A]}_J(u,v) &:= \sum_{F\in \face}\int_{F} \jump[u]\cdot \jump[v]\ ds,\\
\widetilde{\mcal[A]}(u,v)&:={\mcal[A]}_\nabla(u,v)+\alpha\frac{p^2}{h}{\mcal[A]}_J(u,v),
\end{aligned}
\end{equation}
and let $A_\nabla$, $A_J$, and $\widetilde{A}$ be their corresponding operators. Clearly, $A_\nabla$ and $A_J$ are both symmetric and positive semi-definite, and $\widetilde{A}$ is symmetric and positive definite. Moreover, thanks to Lemma~\ref{lem:contcoerc}, and the quasi-uniformity of the partition, the following spectral equivalence result holds
\begin{equation}
\label{tildeequiv}
\Aa[][u][u]\approx \normDG[u][2]\approx \widetilde{\mcal[A]}(u,u).
\end{equation}
We can then replace formulation \eqref{DG} with the following equivalent problem
\begin{equation}
\label{problemtilde1}
\widetilde{A}u = (A_\nabla + \frac{1}{\epsilon}A_J)u = \tilde{f},
\end{equation}
with $\epsilon := h/(\alpha p^2)<1$. After some simple calculations,
we can write \eqref{problemtilde1} as
\begin{equation}
\label{problemtilde}
\left[\epsilon(A_\nabla + A_J)+(1-\epsilon)A_J\right]u = \epsilon\tilde{f},
\end{equation}
which corresponds to \eqref{nearly} with $A_1=A_\nabla + A_J$ and $A_0=(1-\epsilon)A_J$. In order to obtain a suitable space splitting satisfying Assumption~\ref{A1}, we observe that, according to the definition above, the kernel of $A_0$ is given by the space of continuous polynomial functions of degree $p$ vanishing on the boundary $\partial\Om$. We then derive the first space decomposition
\begin{equation}
\label{decompDG}
V_{hp} = V_{hp}^B + V_{hp}^C ,
\end{equation}
with
\begin{align}
V_{hp}^{B}&:=\{v\in V_{hp}:v(\xi_p)=0\quad \forall\xi_p\in \bigcup_{\elem\in\mesh}\nodesI\},\\
V_{hp}^{C}&:=\{v\in C^0(\overline{\Omega}):v\circ \mathsf{F}_\elem \in \mathbb{Q}^p(\hat\elem)\quad\forall \elem\in \mesh,\ v|_{\partial\Om}=0\}\subseteq H^1_0(\Omega),
\end{align}
\emph{i.e.}, $V_{hp}^{B}$ consists of the functions in $V_{hp}$ that
are null in any degree of freedom in the interior of any
$\elem\in\mesh$. Moreover, we observe that $V_{hp}^B\subset V_{hp}$,
and $V_{hp}^{B}\cap V_{hp}^{C}\subset V_{hp}^C$, hence Assumption~\ref{A1} is satisfied by decomposition \eqref{decompDG}, which will be the basis to develop the
analysis of our preconditioner for problem \eqref{DG}.
\subsection{Technical results}
In this subsection we present several results, which will be fundamental for the forthcoming analysis. We introduce a suitable interpolation operator $\DGC:V_{hp}\rightarrow V_{hp}^C$, consisting of the Oswald operator, cf. \cite{Hopetal96,KarPa03,ElAlErn04,Bur05,BurErn07}. For any $v\in V_{hp}$, we can define on each $\elem\in\mesh$ the action of the operator \DGC, by prescribing the value of $\DGC v$ in any ${\xi_p}\in\nodes$:
\begin{equation}
\label{defQ}
\DGC v({\xi_p}) := \left\{
\begin{aligned}
&0\qquad &&\textnormal{if }{\xi_p} \in \partial\Om,\\
&\frac{1}{\textnormal{card}(\mcal[T][{\xi_p}])}\sum_{\elem\in \mcal[T][{\xi_p}]}v|_\elem({\xi_p})&& \textnormal{otherwise},
\end{aligned} \right.
\end{equation}
with $\mcal[T][{\xi_p}] := \{\elem'\in\mesh: {\xi_p}\in \elem'\}$. Note that from the above definition it follows that $v-\DGC v\in V_{hp}^B$, for any $v\in V_{hp}$.\par\medskip
In addition to the space of polynomials $\mathbb{Q}^p(\elem)$, we define $\mathbb{Q}^{p}_0(\elem)$ as
\begin{align}
\mathbb{Q}^{p}_0(\elem) &:=\{v\in \mathbb{Q}^p(\elem): v({\xi_p})=0\quad\forall{\xi_p}\in\nodesI\},
\end{align}
and state the following trace and inverse trace inequalities.
\begin{lemma}[{\cite[Lemma~3.1]{BurErn07}}]
\label{lem:trace_invtrace}
The following trace and inverse trace inequalities hold
\begin{align}
\normL[v][\partial\elem][2]&\lesssim \frac{p^2}{h_\elem}\normL[v][\elem][2]\ \forall v\in\mathbb{Q}^p(\elem),\label{trace}\\ 
\normL[v][\elem][2]&\lesssim \frac{h_\elem}{p^2}\normL[v][\partial\elem][2]\ \forall v\in\mathbb{Q}^{p}_0(\elem).\label{invtrace} 
\end{align}
\end{lemma}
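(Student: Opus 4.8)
The plan is to transfer both inequalities to the reference cube $\elemref$, where they become $\h[\elem]$-free and $p$-explicit, and then recover the mesh-size dependence by affine scaling. Set $\hat v:=v\circ\mathsf{F}_\elem\in\mathbb{Q}^p(\elemref)$. Since $\mathsf{F}_\elem$ is affine and $\elem$ is shape-regular with diameter $\h[\elem]$, the Jacobian determinant of $\mathsf{F}_\elem$ is comparable to $\h[\elem]^{d}$ and the $(d-1)$-dimensional measure of each face of $\elem$ is comparable to $\h[\elem]^{d-1}$, whence $\normL[v][\elem][2]\approx\h[\elem]^{d}\,\normL[\hat v][\elemref][2]$ and $\normL[v][\partial\elem][2]\approx\h[\elem]^{d-1}\,\normL[\hat v][\partial\elemref][2]$ with constants depending only on shape-regularity; moreover $\mathsf{F}_\elem$ maps $\nodesIref$ bijectively onto $\nodesI$, so $v\in\mathbb{Q}^p_0(\elem)$ precisely when $\hat v$ vanishes at every node of $\nodesIref$. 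It therefore suffices to prove
\[
\begin{aligned}
\normL[\hat v][\partial\elemref][2]&\lesssim p^{2}\,\normL[\hat v][\elemref][2] && \text{for all }\hat v\in\mathbb{Q}^p(\elemref),\\
\normL[\hat v][\elemref][2]&\lesssim p^{-2}\,\normL[\hat v][\partial\elemref][2] && \text{for all }\hat v\in\mathbb{Q}^p(\elemref)\text{ with }\hat v\equiv0\text{ on }\nodesIref,
\end{aligned}
\]
after which \eqref{trace} and \eqref{invtrace} follow by multiplying through by the suitable power of $\h[\elem]$.

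For the trace inequality I would first record the univariate bound $|q(\pm1)|^2\lesssim p^{2}\,\|q\|_{L^2(-1,1)}^2$, valid for every polynomial $q$ of degree at most $p$; this is classical and follows by expanding $q$ in Legendre polynomials $L_k$, using $L_k(\pm1)=(\pm1)^k$ and $\|L_k\|_{L^2(-1,1)}^2=2/(2k+1)$, and applying Cauchy--Schwarz to the $p+1$ expansion coefficients. Then, for a face $F=\{x_1=1\}\times[-1,1]^{d-1}$ of $\elemref$, apply this bound in the variable $x_1$ for each fixed $x'\in[-1,1]^{d-1}$ to get $|\hat v(1,x')|^2\lesssim p^{2}\int_{-1}^{1}\hat v(x_1,x')^2\,dx_1$, integrate over $x'$ to obtain $\normL[\hat v][F][2]\lesssim p^{2}\,\normL[\hat v][\elemref][2]$, and sum over the $2d$ faces of $\elemref$.

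For the inverse trace inequality the GLL quadrature of Section~\ref{sec:GLL} is the natural tool. By \eqref{GLLequiv}, $\normL[\hat v][\elemref][2]\approx\|\hat v\|_{0,p,\elemref}^2=\sum_{\hat\xi_p\in\nodesref}\hat v(\hat\xi_p)^2\,\hat{\mathsf{w}}_{\xi_p}$, and when $\hat v$ vanishes at the interior nodes only the boundary nodes $\hat\xi_p\in\nodesBref$ remain. The arithmetic core is that every $d$-dimensional GLL weight is a product of $d$ univariate GLL weights and that the univariate weight at an endpoint of $[-1,1]$ equals $2/(p(p+1))\approx p^{-2}$; hence, for a boundary node lying on a face $F$ of $\elemref$ one has exactly $\hat{\mathsf{w}}_{\xi_p}=c_p\,\mathsf{w}_{\xi_p}^{F}$ with $c_p:=2/(p(p+1))$ and $\mathsf{w}_{\xi_p}^{F}$ the $(d-1)$-dimensional GLL weight of $\hat\xi_p$ viewed as a node of $F$ (the direction normal to $F$ supplying the endpoint factor $c_p$). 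Bounding $\sum_{\hat\xi_p\in\nodesBref}\hat v(\hat\xi_p)^2\,\hat{\mathsf{w}}_{\xi_p}$ from above by summing the per-face contributions over all $2d$ faces — which only adds nonnegative terms at shared edges and corners — gives
\[
\|\hat v\|_{0,p,\elemref}^2\ \lesssim\ p^{-2}\sum_{F\subset\partial\elemref}\ \sum_{\hat\xi_p\in\nodesBref\cap F}\hat v(\hat\xi_p)^2\,\mathsf{w}_{\xi_p}^{F}\ =\ p^{-2}\sum_{F\subset\partial\elemref}\|\hat v|_{F}\|_{0,p,F}^{2}.
\]
Since $\hat v|_{F}\in\mathbb{Q}^p(F)$ and $\nodesBref\cap F$ is exactly the set of $(p+1)^{d-1}$ GLL nodes of $F$, applying \eqref{GLLequiv} once more on each face gives $\|\hat v|_{F}\|_{0,p,F}^{2}\approx\normL[\hat v][F][2]$, so the right-hand side is $\approx p^{-2}\sum_{F}\normL[\hat v][F][2]=p^{-2}\,\normL[\hat v][\partial\elemref][2]$; together with $\normL[\hat v][\elemref][2]\approx\|\hat v\|_{0,p,\elemref}^2$ this is the second reference estimate, and rescaling concludes the proof.

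I expect the inverse trace inequality to be the delicate point: the whole gain of $p^{-2}$ rests on the decay $2/(p(p+1))\approx p^{-2}$ of the endpoint GLL weight, so the weight bookkeeping at edge and corner nodes has to be handled so that only a uniform constant is lost, and one must verify that each trace $\hat v|_{F}$ is a genuine degree-$p$ tensor polynomial on $F$ for \eqref{GLLequiv} to be applicable on $\partial\elemref$. The hypothesis $v\in\mathbb{Q}^p_0(\elem)$ is used at exactly one place — it eliminates the interior-node values, which carry $L^2$-mass invisible to the boundary trace and would otherwise make the estimate false.
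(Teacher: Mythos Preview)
The paper does not prove this lemma; it is quoted directly from \cite[Lemma~3.1]{BurErn07} and no argument is given in the text. Your proof is correct and follows what is essentially the standard route (and the one in the cited reference): affine scaling to the reference cube, the univariate endpoint bound $|q(\pm1)|^2\lesssim p^2\|q\|_{L^2(-1,1)}^2$ tensorized across the remaining variables for \eqref{trace}, and the GLL equivalence \eqref{GLLequiv} together with the explicit endpoint weight $2/(p(p+1))$ for \eqref{invtrace}. Your bookkeeping at lower-dimensional strata is sound: the identity $\hat{\mathsf w}_{\xi_p}=c_p\,\mathsf w_{\xi_p}^{F}$ holds for \emph{every} boundary node lying on a given face $F$, since the direction normal to $F$ contributes exactly one endpoint factor regardless of whether the node sits in the relative interior of $F$ or on an edge or corner, and the overcounting incurred by summing over all $2d$ faces only inflates the hidden constant.
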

\par\medskip
The next result is a keypoint for the forthcoming analysis, and can be found in \cite[Lemma~3.2]{BurErn07}; for the sake of completeness the proof is reported in the Appendix.
\begin{lemma}
\label{lem:vQv}
For any $v\in V_{hp}$, the following estimate holds
\begin{equation}
\label{ineqQ}
\normL[v-\DGC v][\elem][2]\lesssim \frac{h_\elem}{p^{2}}\sum_{F\in\faceElem}\normL[\jump[v]][F][2],
\end{equation}
with $\faceElem := \{F\in \face: F\cap \elem \neq \emptyset\}$.
\end{lemma}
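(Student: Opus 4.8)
Following \cite[Lemma~3.2]{BurErn07}, the plan is to trade the volume norm on $\elem$ for a trace norm on $\partial\elem$ via the inverse trace inequality, and then to bound that trace by the jumps of $v$. By the definition \eqref{defQ} of $\DGC$, the function $w:=v-\DGC v$ vanishes at every interior GLL node of $\elem$, hence $w|_\elem\in\mathbb{Q}^p_0(\elem)$ and \eqref{invtrace} gives
\[
\normL[v-\DGC v][\elem][2]\lesssim \frac{h_\elem}{p^2}\,\normL[v-\DGC v][\partial\elem][2]=\frac{h_\elem}{p^2}\sum_{F\subset\partial\elem}\normL[v-\DGC v][F][2].
\]
The factor $h_\elem/p^2$ is now in place, so the task reduces to proving $\normL[v-\DGC v][F][2]\lesssim\sum_{F'\in\faceElem}\normL[\jump[v]][F'][2]$ for each face $F\subset\partial\elem$.

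Fix such a face $F$. Since $w|_F\in\mathbb{Q}^p(F)$, the Gauss--Legendre--Lobatto equivalence \eqref{GLLequiv}, applied on the $(d-1)$-dimensional face $F$, yields $\normL[w][F][2]\approx\sum_{\xi_p\in\nodes,\,\xi_p\in\overline F}w(\xi_p)^2\,\mathsf{w}^F_{\xi_p}$, where $\mathsf{w}^F_{\xi_p}$ are the GLL weights on $F$. The heart of the matter is to estimate the nodal value $w(\xi_p)=v|_\elem(\xi_p)-\DGC v(\xi_p)$ at a GLL node $\xi_p\in\partial\elem$. By \eqref{defQ}, for $\xi_p\notin\partial\Om$ this is an average over $\elem'\in\mcal[T][{\xi_p}]$ of the quantities $v|_\elem(\xi_p)-v|_{\elem'}(\xi_p)$, while for $\xi_p\in\partial\Om$ it equals $v|_\elem(\xi_p)$ (since $\DGC v(\xi_p)=0$). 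Telescoping each $v|_\elem(\xi_p)-v|_{\elem'}(\xi_p)$ along a chain of elements of $\mcal[T][{\xi_p}]$ whose consecutive members share a face containing $\xi_p$ --- and, in the boundary case, prolonging the chain until it reaches an element carrying a boundary face $F_b\ni\xi_p$, where the jump reduces to the trace of $v$ up to a unit normal and so absorbs the leftover term --- bounds $|w(\xi_p)|$ by $\sum_{F'\in\faceElem,\,\xi_p\in\overline{F'}}|\jump[v](\xi_p)|$. Shape-regularity and quasi-uniformity of \mesh make $\textnormal{card}(\mcal[T][{\xi_p}])$ and all chain lengths bounded independently of $h$ and $p$, so Cauchy--Schwarz gives $w(\xi_p)^2\lesssim\sum_{F'\in\faceElem,\,\xi_p\in\overline{F'}}|\jump[v](\xi_p)|^2$.

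Inserting this estimate, interchanging the sums over $\xi_p$ and $F'$, and using that the GLL weights at a node shared by two faces are comparable (each being comparable to $h_\elem^{d-1}$ times the corresponding $(d-1)$-dimensional GLL reference weight, by quasi-uniformity of \mesh), we obtain $\normL[w][F][2]\lesssim\sum_{F'\in\faceElem}\sum_{\xi_p\in\nodes,\,\xi_p\in\overline{F'}}|\jump[v](\xi_p)|^2\,\mathsf{w}^{F'}_{\xi_p}$; the inner sum runs over a subset of the GLL nodes of $F'$, and a last application of \eqref{GLLequiv} on $F'$ --- legitimate since $v^+-v^-$ on an interior face, resp.\ $v$ on a boundary face, belongs to $\mathbb{Q}^p(F')$ --- converts it into $\normL[\jump[v]][F'][2]$. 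Summing over the finitely many $F\subset\partial\elem$ and combining with the inverse trace estimate above yields \eqref{ineqQ}. The main obstacle is precisely the telescoping step: one must verify that every face appearing in the chains around a node lies in $\faceElem$, and that the combinatorial data (the valence at a node, the chain lengths) are bounded uniformly in the discretization parameters --- this is where the mesh-regularity hypotheses are used, while the remaining steps are routine manipulations with the GLL quadrature.
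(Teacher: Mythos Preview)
Your argument is correct and takes a genuinely different route from the paper's proof (which reproduces \cite[Lemma~3.2]{BurErn07}). The paper decomposes $w=(v-\DGC v)|_\elem$ by the dimension of the stratum carrying each boundary GLL node, writing $w=\sum_{F\subset\partial\elem}\psi_F+\sum_{\ell=0}^{d-2}\mathsf{r}_\ell$, and then treats each piece with a cascade of inverse trace inequalities down to the $\ell$-dimensional skeleton and trace inequalities back up to faces; this forces a separate case analysis for $d=1,2,3$. You instead apply a single inverse trace inequality $\elem\to\partial\elem$, then work node-by-node on each face via the GLL $L^2$-equivalence~\eqref{GLLequiv}, telescope $v|_\elem(\xi_p)-\DGC v(\xi_p)$ into jump values at $\xi_p$, and convert back with~\eqref{GLLequiv} on the neighbouring faces $F'$. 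The crucial observation that makes your approach go through --- and which the paper's stratum-by-stratum argument circumvents --- is that the GLL face weights $\mathsf{w}^F_{\xi_p}$ and $\mathsf{w}^{F'}_{\xi_p}$ at a shared node are comparable because conformity forces $\xi_p$ to have the same ``type'' (face-interior, edge-interior, or vertex) on every face containing it, so the tensor-product reference weight is the same up to the $h$-scaling. Your route is more compact and dimension-independent, but it is tied to the GLL nodal basis through~\eqref{GLLequiv}; the paper's approach uses only the trace/inverse-trace pair~\eqref{trace}--\eqref{invtrace} and so is somewhat less basis-specific, at the price of the recursive case analysis.
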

Thanks to Lemma~\ref{lem:vQv} we can prove the following theorem.
\begin{theorem}
\label{thm:stab}
For any $v\in V_{hp}$, it holds that
\begin{equation}
\label{stab}
\Aa[][v-\DGC v][v-\DGC v]+\Aa[][\DGC v][\DGC v]\lesssim \Aa[][v][v],
\end{equation}
where $\DGC v\in V_{hp}^C$ is defined as in \eqref{defQ}. Then the space decomposition defined in \eqref{decompDG} is stable.
\end{theorem}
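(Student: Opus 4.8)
The plan is to transfer everything to the DG norm and then lean on Lemma~\ref{lem:vQv}. By the spectral equivalence \eqref{tildeequiv}, estimate \eqref{stab} is equivalent to
\[
\normDG[v-\DGC v][2] + \normDG[\DGC v][2] \lesssim \normDG[v][2],
\]
so I would set $w := v - \DGC v$ and bound the two summands separately. Recall from \eqref{defQ} that $w \in V_{hp}^B$, i.e.\ $w|_\elem \in \mathbb{Q}^{p}_0(\elem)$ on each $\elem\in\mesh$; this is what makes the inverse-type estimates applicable.

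For $\normDG[w][2]$ the penalty contribution is essentially free: since $\DGC v$ is continuous and vanishes on $\partial\Om$, one has $\jump[w]=\jump[v]$ on every face, so $\sum_{F\in\face}\normL[\sigma^{1/2}\jump[w]][F][2]=\sum_{F\in\face}\normL[\sigma^{1/2}\jump[v]][F][2]\le\normDG[v][2]$. The broken-gradient contribution is the heart of the argument: on each $\elem$ I would chain the $hp$-inverse inequality $\normL[\nabla w][\elem][2]\lesssim (p^2/\h[\elem])^2\normL[w][\elem][2]$ (valid for all polynomials of degree $p$) with Lemma~\ref{lem:vQv}, $\normL[w][\elem][2]\lesssim(\h[\elem]/p^2)\sum_{F\in\faceElem}\normL[\jump[v]][F][2]$. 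The $h$- and $p$-powers then collapse to exactly $p^2/\h[\elem]$, which by quasi-uniformity is dominated by the penalty weight $\sigma|_F\approx\alpha p^2/h$ on each $F\in\faceElem$ (using $\alpha\ge1$). Summing over elements, and using that every face belongs to $\faceElem$ for only $O(1)$ elements, gives $\sum_{\elem\in\mesh}\normL[\nabla w][\elem][2]\lesssim\sum_{F\in\face}\normL[\sigma^{1/2}\jump[v]][F][2]\le\normDG[v][2]$, hence $\normDG[w][2]\lesssim\normDG[v][2]$.

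For the conforming part, $\DGC v\in V_{hp}^C\subseteq H^1_0(\Om)$ has vanishing jumps, so $\normDG[\DGC v][2]=\sum_{\elem\in\mesh}\normL[\nabla\DGC v][\elem][2]$; writing $\DGC v = v - w$ and using the elementwise triangle inequality, this is $\lesssim\normDG[v][2]+\normDG[w][2]\lesssim\normDG[v][2]$ by the previous step. Combining the two estimates and invoking \eqref{tildeequiv} once more yields \eqref{stab}, and the stability of \eqref{decompDG} then follows by definition.

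I expect the main obstacle to be the broken-gradient estimate for $w$: one must be careful that the inverse inequality and Lemma~\ref{lem:vQv} are applied with exactly the right $h$- and $p$-scalings, so that their product is precisely the penalty weight $\sigma$ --- any mismatch reintroduces a stray power of $p$ or of $1/h$ and destroys the uniformity we are after. The remaining bookkeeping --- the finite overlap of the patches $\faceElem$ and the comparison $\h[\elem]\approx\min(h_{\elem^+},h_{\elem^-})$ --- is routine but relies on the shape-regularity and quasi-uniformity of \mesh.
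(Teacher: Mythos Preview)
Your proof is correct and follows essentially the same route as the paper. The paper compresses your two-step estimate on $w=v-\DGC v$ (inverse inequality for the gradient part, identity $\jump[w]=\jump[v]$ for the penalty part) into a single application of the eigenvalue bound \eqref{eigA}, $\Aa[][w][w]\lesssim\sum_{\elem}\alpha p^4 h_\elem^{-2}\normL[w][\elem][2]$, and then invokes Lemma~\ref{lem:vQv} exactly as you do; the bound on $\DGC v$ is obtained by the same triangle inequality.
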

\begin{proof}
We observe that, from \eqref{eigA}, the quasi-uniformity of the mesh and Lemma~\ref{lem:vQv}, we obtain
\begin{align}
\Aa[][v-\DGC v][v-\DGC v] &\lesssim \sum_{\elem\in\mesh}\alpha\frac{p^4}{h_\elem^{2}}\normL[v-\DGC v][\elem][2]
\lesssim \alpha\sum_{\elem\in\mesh}\frac{p^4}{h_\elem^{2}}\frac{h_\elem}{p^2}\sum_{F\in\faceElem} \normL[\jump[v]][F][2]\\
&\lesssim \sum_{F\in\face}\normL[\sigma^{1/2}\jump[v]][F][2]
\lesssim \Aa[][v][v].
\label{stabvQv}
\end{align}
The upper bound \eqref{stab} follows from the triangle inequality and the above estimate
\begin{align}
\Aa[][\DGC v][\DGC v]\leq \Aa[][v-\DGC v][v-\DGC v]+\Aa[][v][v]\lesssim\Aa[][v][v].
\end{align}
For any $v\in V_{hp}$, we recall that $v-\DGC v\in V_{hp}^B$, which implies
\begin{align}
\inf_{\substack{v^B\in V_{hp}^B,v^C\in V_{hp}^C\\v^B+v^C=v}}\Aa[][v^B][v^B]+\Aa[][v^C][v^C]&\leq \Aa[][v-\DGC v][v-\DGC v]+\Aa[][\DGC v][\DGC v]\lesssim \Aa[][v][v].
\end{align}
\end{proof}

\section{Construction and analysis of the preconditioner}
\label{sec:prec}
In this section we introduce our preconditioner and analyze the condition number of the preconditioned system. Employing the nomencalture of \cite{Xu92}, the preconditioner is a parallel subspace correction method (also known as additive Schwarz preconditioner, see. \emph{e.g.}, \cite{1988LionsP-aa,1988WidlundO-aa,1990DryjaM_WidlundO-aa}). Our construction uses a decomposition in two subspaces, cf. \eqref{decompDG} below, and inexact subspace solvers. Each of the
subspace solvers is a parallel subspace correction method itself.

\subsection{Canonical representation of a parallel subspace correction method}
The main ingredients needed for the analysis of the parallel subspace correction (PSC) preconditioners are suitable space splittings and the corresponding subspace solvers (see \cite{1988LionsP-aa,1988WidlundO-aa,1990DryjaM_WidlundO-aa, Xu92,GrieOsw_additive95,XuZik02,TosWid04}). In our analysis we will use the notation and the general setting from~\cite{XuZik02}. We have the following abstract result. 
\begin{lemma}[{\cite[Lemma~2.4]{XuZik02}}]
\label{lem:sumTi} 
Let $V$ be a Hilbert space which is decomposed as
$V = \sum_{i=1}^N V_i$, $V_i\subset V$, $i=1,\dots,N$,
and $T_i:V\rightarrow V_i$, $i=1,\ldots, N$ be operators whose
restrictions on $V_i$ are symmetric and positive definite. 
For $T:=\sum_{i=1}^N T_i$ the following identity holds
\begin{equation}\label{sumTi}
\Aa[][T^{-1}v][v] = 
\inf_{\substack{v_i\in V_i \\ \sum v_i = v}} \sum_{i=1}^N\Aa[][T_i^{-1}v_i][v_i].
\end{equation}
\end{lemma}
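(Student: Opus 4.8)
The plan is to read \eqref{sumTi} as the value of a quadratic minimisation problem with a single linear constraint, whose minimiser can be written down explicitly in terms of $T$ itself. For each $i$, introduce the bilinear form $a_i(w_i,v_i):=\Aa[][T_i^{-1}w_i][v_i]$ on $V_i$, where $T_i^{-1}$ denotes the inverse of the restriction $T_i|_{V_i}$ (symmetric and positive definite by hypothesis); then $a_i$ is itself symmetric and positive definite on $V_i$, the right-hand side of \eqref{sumTi} is precisely $\inf\{\sum_{i=1}^N a_i(v_i,v_i):v_i\in V_i,\ \sum_{i=1}^N v_i=v\}$, and the operator $T_i:V\to V_i$ satisfies the Galerkin-type relation $a_i(T_i w,v_i)=\Aa[][w][v_i]$ for all $w\in V$ and all $v_i\in V_i$ (this is how the $T_i$ are built in the framework of \cite{XuZik02}).

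The first step is to check that $T:=\sum_{i=1}^N T_i$ is symmetric and positive definite with respect to $\Aa[][\cdot][\cdot]$, so that $T^{-1}v$ is well defined. Symmetry follows from the Galerkin relation together with the symmetry of $\mathcal{A}$ and of the $a_i$, via $\Aa[][T_i w][v]=a_i(T_i w,T_i v)=\Aa[][w][T_i v]$, summed over $i$. For positivity, using the Galerkin relation with $v_i=T_i w$ gives $\Aa[][T w][w]=\sum_{i=1}^N a_i(T_i w,T_i w)\geq 0$, and this vanishes only if $T_i w=0$ for every $i$, i.e. $\Aa[][w][v_i]=0$ for all $v_i\in V_i$ and all $i$; since $V=\sum_{i=1}^N V_i$ this forces $\Aa[][w][\cdot]\equiv 0$ on $V$, hence $w=0$. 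This is the one point where the decomposition hypothesis $V=\sum_i V_i$ is used.

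The second step is to exhibit the minimiser. Set $u:=T^{-1}v$ and $v_i^{\star}:=T_i u$; then $\sum_i v_i^{\star}=Tu=v$, so $(v_i^{\star})_i$ is an admissible decomposition. For an arbitrary admissible decomposition $v=\sum_i v_i$, write $v_i=v_i^{\star}+\delta_i$ with $\sum_i\delta_i=0$. Expanding the objective and using the Galerkin relation, the cross term vanishes: $\sum_i a_i(v_i^{\star},\delta_i)=\sum_i a_i(T_i u,\delta_i)=\sum_i\Aa[][u][\delta_i]=\Aa[][u][\sum_i\delta_i]=0$, so that $\sum_i a_i(v_i,v_i)=\sum_i a_i(v_i^{\star},v_i^{\star})+\sum_i a_i(\delta_i,\delta_i)\geq\sum_i a_i(v_i^{\star},v_i^{\star})$, with equality precisely when all $\delta_i=0$, by positive definiteness of each $a_i$ on $V_i$. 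Hence the infimum is attained at $(v_i^{\star})_i$, and it equals $\sum_i a_i(T_i u,T_i u)=\sum_i\Aa[][u][T_i u]=\Aa[][u][Tu]=\Aa[][u][v]=\Aa[][T^{-1}v][v]$, which is the left-hand side of \eqref{sumTi}.

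The argument is elementary linear algebra, and I do not expect a genuine obstacle. The only points that require care are keeping the two inner products $\Aa[][\cdot][\cdot]$ and $a_i(\cdot,\cdot)$ straight through the defining (Galerkin) relation for $T_i$, and invoking $V=\sum_i V_i$ to guarantee the invertibility of $T$ — equivalently, that the set of admissible decompositions in \eqref{sumTi} is nonempty and a minimiser of the stated form exists.
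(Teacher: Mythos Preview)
The paper does not supply its own proof of this lemma; it is stated as a citation of \cite[Lemma~2.4]{XuZik02} and used as a black box. There is therefore nothing in the paper to compare your argument against.

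Your proof is correct and is essentially the standard one. You are right to flag that the bare hypothesis ``$T_i|_{V_i}$ is symmetric positive definite'' is not by itself enough to force the identity: one needs the Galerkin-type relation $a_i(T_i w,v_i)=\Aa[][w][v_i]$ for all $w\in V$, $v_i\in V_i$, which is equivalent to $T_i=(T_i|_{V_i})\,P_i$ with $P_i$ the elliptic projection onto $V_i$. This is precisely how the operators $T_B$ and $T_C$ are built later in the paper (and how the $T_i$ are defined in \cite{XuZik02}), so your reading of the implicit assumption is the intended one. With that in hand, your two steps---invertibility of $T$ from $V=\sum_i V_i$, and the explicit minimiser $v_i^\star=T_iT^{-1}v$ with vanishing cross term---give exactly \eqref{sumTi}.
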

According to the above lemma, to show a bound on the condition number of
the preconditioned system we need to show that there exist positive constants
$c$ and $C$ such that
$$
c \Aa[][v][v] \le \Aa[][T^{-1} v][v] \le C \Aa[][v][v].
$$
\begin{remark}\label{remark:Pi}
In many cases we have $T_i=P_i$, $i=1,\dots,N$, where $P_i:V\rightarrow V_i$ are the elliptic projections defined as follows: for $v\in V$, its projection $P_iv$ is the unique element
  of $V_i$ satisfying $\Aa[][P_i v][v_i] := \Aa[][v][v_i]$, for all
  $v_i\in V_i$. Note that by definition, $P_i$ is the identity on $V_i$, namely, $P_iv_i = v_i=P_i^{-1}v_i$, for all $v_i\in V_i$. Hence, for
  $T = \sum_{i=1}^N P_i$, the relation \eqref{sumTi} gives
\begin{equation}\label{sumi}
\Aa[][T^{-1}v][v] = \inf_{\substack{v_i\in V_i\\\sum v_i = v}} \sum_{i=1}^N\Aa[][v_i][v_i].
\end{equation}
\end{remark}

\subsection{Space splitting and subspace solvers}
To fix the notation, let us point out that in what follows we use $T$ (with subscript when necessary) to denote (sub)space solvers and preconditioners. Accordingly, $P$ with subscript or superscript will denote elliptic projection on the corresponding subspace, which will be clear from the context.\par\medskip
 
We now define the space splitting and the corresponding subspace solvers. We recall the space decomposition from
Section~\ref{sec:decomp}, $V_{hp} = V_{hp}^B + V_{hp}^C$, where $V_{hp}^{B}$ are all functions in $V_{hp}$ for which the degrees of freedom in the interior of any $\elem\in\mesh$ vanish, and $V_{hp}^C$ is the space of high order continuous polynomials vanishing on $\partial\Om$. Note that $V_{hp}^{B}\cap V_{hp}^{C}\neq \{0\}$, and that $V_{hp}^B$ contains non-smooth and oscillatory functions, while  $V_{hp}^C$  contains the smooth part of the space $V_{hp}$. Next, on each of these subspaces we define approximate solvers $T_B:V_{hp}\rightarrow V_{hp}^B$ and $T_C: V_{hp}\rightarrow V_{hp}^C$.\\
First, we decompose $V_{hp}^B$  as follows
\begin{equation}
\label{Jdecomp}
V_{hp}^B = \sum_{\elem\in\mesh}\sum_{{\xi_p}\in\nodesB} V^{\xi_p},
\end{equation}
where
$$
V^{\xi_p}:=\left\{v\in V_{hp}^B: v({\xi_p}')=0\textnormal{ for any }
  {\xi_p}'\in\left(\bigcup_{\elem\in\mesh}\nodesB\right)\setminus\{{\xi_p}\}\right\}.
$$
The approximate solver on $V_B$ then is a simple Jacobi method, defined as 
\begin{equation*}\label{defPJ}
T_B:V_{hp}\rightarrow V^B_{hp}, \quad 
T_B:=
\left[\sum_{\elem\in\mesh}\sum_{{\xi_p}\in\nodesB}P^{\xi_p}\right] P_B.
\end{equation*}
where $P_B$ and $P^{\xi_p}$ are the elliptic projections on $V_{hp}^B$ and $V^{\xi_p}$, respectively. 
Note that $T_B$ is defined on all of $V_{hp}$ and is also an isomorphism when restricted to $V_{hp}^B$, because the elliptic
projection $P_B$ and $P^{\xi_p}$ are the identity on $V^B_{hp}$ and $V^{\xi_p}$, respectively. In addition, the splitting
is a direct sum,  and, hence, any $v\in V_{hp}^B$ is uniquely represented as
$v = \sum_{\elem\in\mesh}\sum_{{\xi_p}\in\nodesB}v^{\xi_p}$,
$v^{\xi_p}\in V^{\xi_p}$. Then, taking $P_i=P^{\xi_p}P_B: V_{hp}\rightarrow V^{\xi_p}$, from \eqref{sumi}, we have 
\begin{equation}
\label{sumPJ}
\Aa[][T_B^{-1}v^B][v^B] =
\sum_{\elem\in\mesh}\sum_{{\xi_p}\in\nodesB}\Aa[][v^{\xi_p}][v^{\xi_p}],
\quad\forall\quad v^B\in V_{hp}^B. 
\end{equation}

Next, we introduce the preconditioner $T_C$ on $V_{hp}^C$. This is the two-level overlapping additive Schwarz method introduced in~\cite{Pava92} for high order \emph{conforming} discretizations. If
we denote by $N_V$ the number of interior vertices of \mesh, then this
preconditioner corresponds to the following decomposition of $V_{hp}^C$:
\begin{equation}
\label{decompC}
V^C_{hp} = \sum_{i=0}^{N_V} V^C_i.
\end{equation}
Here $V_0^C$ is the (coarse) space of continuous piecewise
linear functions on \mesh, and for $i=1,\ldots,N_V$, 
$V^C_i := V^C_{hp}\cap H^{1}_0(\Om_i)$, where $\Om_i$ 
is the union of the elements sharing the $i$-th
vertex (see Fig.~\ref{fig:subdomain} for a two-dimensional
example). We recall that, in the case of Neumann and mixed boundary
conditions, in order to obtain a uniform preconditioner, the
decomposition \eqref{decompC} should be enriched with the subdomains
associated to those vertices not lying on a Dirichlet boundary, see
\cite{Pava92} for details.

\begin{figure}[htb]
\centering
\includegraphics[scale=1.2]{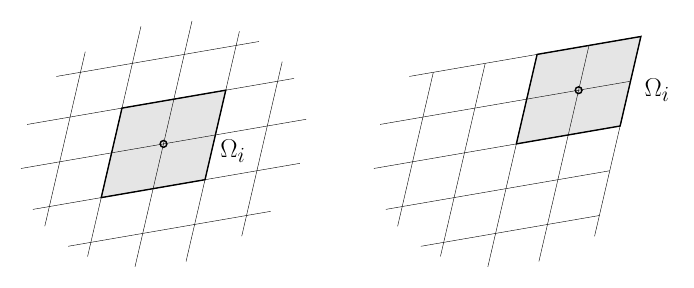}
\caption{Examples of subdomains in a two-dimensional setting.}
\label{fig:subdomain}
\end{figure}

Then, for any $V^C_i$, $i = 0,\dots,N_V$, we denote by $P_i^C:V^C_{hp}\rightarrow V^C_i$ the elliptic projections on $V^C_i$ and define the two-level overlapping additive Schwarz operator as
\begin{equation}
\label{ASM} 
T_C: V_{hp}\rightarrow V_{hp}^C, \quad 
T_C:=\left[P_0^C+\sum_{i=1}^{N_V}P_i^C\right]P_C = (P_0 + P_V)P_C,
\end{equation}
where $P_C$ is the elliptic projection on $V_{hp}^C$. As in the case of $V_B$, we have that the restriction of $T_C$ on $V_{hp}^C$ is an isomorphism.  In addition, from \eqref{sumi} with $P_i = P_i^CP_C:V_{hp}\rightarrow V_i^C$, we have 
\begin{equation}
\begin{aligned}
\label{sumPC}
\Aa[][T_C^{-1}v][v] =\inf_{\substack{v_i\in V_i^C\\\sum v_i = v}} \sum_{i=0}^{N_V}\Aa[][v_i][v_i].
\end{aligned}
\end{equation}

\subsection{Definition of the global preconditioner}
Finally, we define the global preconditioner on $V_{hp}$ by setting 
\begin{equation}
\label{PDG}
T_{DG}:V_{hp}\rightarrow V_{hp}, \quad T_{DG}:=T_{B}+T_{C},
\end{equation}
We remark that from  Lemma~\ref{lem:sumTi}, with $N=2$, $T_1=T_B$,
$V_1=V_{hp}^B$, $T_2=T_C$,
$V_2=V_{hp}^C$, we have  
\begin{align}
\label{sumPDG}
\Aa[][T_{DG}^{-1}v][v] &= \inf_{\substack{v^B\in V_{hp}^B,v^C\in V_{hp}^C\\v^B+v^C=v}} \left[\Aa[][T_B^{-1}v^B][v^B]+\Aa[][T_C^{-1}v^C][v^C]\right].
\end{align}

\subsection{Condition number estimates: subspace solvers}
We now show the estimates on the conditioning of the subspace solvers
needed to bound the condition number of $T_{DG}$. 
The first result that we prove is on the conditioning of $T_B$.
\begin{lemma}
\label{thm:jac}
Let $T_{B}$ denote the Jacobi preconditioner defined in \eqref{defPJ}.
Then there exist two positive constants $\mathsf{C}_1^J$ and
$\mathsf{C}_2^J$, independent of the granularity of the mesh $h$, the
polynomial approximation degree $p$ and the penalization coefficient $\alpha$, such that
\begin{align}
\Aa[][T_{B}^{-1}v^B][v^B]&\geq\mathsf{C}_1^J\Aa[][v^B][v^B]\quad \forall v^B\in V_{hp}^B\label{jac1}\\
\Aa[][T_{B}^{-1}(v-\DGC v)][v-\DGC v] &\leq \mathsf{C}_2^J\Aa[][v-\DGC v][v-\DGC v]\quad \forall v\in V_{hp}\label{jac2},
\end{align}
with $\DGC v$ defined in \eqref{defQ}.
\end{lemma}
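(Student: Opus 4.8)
Since the splitting \eqref{Jdecomp} is a direct sum, each $v^B\in V_{hp}^B$ has the unique expansion $v^B=\sum_{\elem\in\mesh}\sum_{\xi_p\in\nodesB}v^{\xi_p}$ with $v^{\xi_p}=v^B|_\elem(\xi_p)\,\phi_{\xi_p}\in V^{\xi_p}$, so \eqref{sumPJ} reads $\Aa[][T_B^{-1}v^B][v^B]=\sum_{\elem\in\mesh}\sum_{\xi_p\in\nodesB}\Aa[][v^{\xi_p}][v^{\xi_p}]$; hence \eqref{jac1} and \eqref{jac2} are two-sided comparisons between this sum of bubble energies and a single DG energy. I would first record three elementary facts: (i) $v^{\xi_p}$ is supported on the single element $\elem$ and, vanishing at every node of $\elem$ except the boundary node $\xi_p$, belongs to $\mathbb{Q}^p_0(\elem)$; (ii) on each face $F\subset\partial\elem$ one has $|\jump[v^{\xi_p}]|=|v^{\xi_p}|$ pointwise, and $\jump[v^{\xi_p}]$ vanishes identically on all faces not contained in $\partial\elem$; (iii) since $v^B$ is zero at the interior nodes of every element, the element-wise form of Lemma~\ref{lem:massequiv} (whose proof is already local) gives $\normL[v^B][\elem][2]\approx\sum_{\xi_p\in\nodesB}\normL[v^{\xi_p}][\elem][2]$.

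The main difficulty is the $p$-uniformity of the lower bound \eqref{jac1}: a direct colouring argument on the splitting \eqref{Jdecomp} would bound the off-diagonal terms of $\Aa[][\sum v^{\xi_p}][\sum v^{\xi_p}]$ by Cauchy--Schwarz and bounded overlap, but an element carries $O(p^{d-1})$ boundary nodes that all interact through $\mcal[A]_\nabla$, so that route loses a factor $p^{d-1}$. I would instead proceed as follows. For \eqref{jac1} it suffices to prove $\Aa[][v^B][v^B]\lesssim\sum_{\elem\in\mesh}\sum_{\xi_p\in\nodesB}\Aa[][v^{\xi_p}][v^{\xi_p}]$. The upper bound of Lemma~\ref{lem:eigA} and quasi-uniformity give $\Aa[][v^B][v^B]\lesssim\alpha\frac{p^4}{h^2}\sum_{\elem}\normL[v^B][\elem][2]$, and fact (iii) rewrites the right-hand side as $\alpha\frac{p^4}{h^2}\sum_{\elem}\sum_{\xi_p\in\nodesB}\normL[v^{\xi_p}][\elem][2]$. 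Thus everything reduces to the single-bubble estimate $\alpha\frac{p^4}{h^2}\normL[v^{\xi_p}][\elem][2]\lesssim\Aa[][v^{\xi_p}][v^{\xi_p}]$, which is exactly where the $\mathbb{Q}^p_0$ inverse trace inequality \eqref{invtrace} is needed — it applies by fact (i), and its $h/p^2$ gain precisely compensates the $\alpha p^4/h^2$ spectral spread of $\mcal[A]$. Concretely, using $\sigma|_F\approx\alpha p^2/h$, fact (ii), the definition \eqref{DGnorm} of the DG norm, and coercivity \eqref{coerc},
\begin{align*}
\alpha\frac{p^4}{h^2}\normL[v^{\xi_p}][\elem][2] &\lesssim \alpha\frac{p^4}{h^2}\cdot\frac{h}{p^2}\,\normL[v^{\xi_p}][\partial\elem][2] \approx \sum_{F\subset\partial\elem}\normL[\sigma^{1/2}\jump[v^{\xi_p}]][F][2]\\
&= \sum_{F\in\face}\normL[\sigma^{1/2}\jump[v^{\xi_p}]][F][2] \le \normDG[v^{\xi_p}][2] \lesssim \Aa[][v^{\xi_p}][v^{\xi_p}].
\end{align*}
Summing over $\elem$ and $\xi_p$ yields \eqref{jac1}.

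For \eqref{jac2} I would run the same machinery in the opposite direction on $v-\DGC v\in V_{hp}^B$ (recall $v-\DGC v\in V_{hp}^B$). Writing $v-\DGC v=\sum_{\elem}\sum_{\xi_p\in\nodesB}w^{\xi_p}$ and using that each $w^{\xi_p}$ is supported on the element indexing the sum, Lemma~\ref{lem:eigA}, quasi-uniformity and fact (iii) give $\sum_{\elem}\sum_{\xi_p\in\nodesB}\Aa[][w^{\xi_p}][w^{\xi_p}]\lesssim\alpha\frac{p^4}{h^2}\sum_{\elem}\normL[v-\DGC v][\elem][2]$. Next, Lemma~\ref{lem:vQv} replaces each $\normL[v-\DGC v][\elem][2]$ by $\frac{h}{p^2}\sum_{F\in\faceElem}\normL[\jump[v]][F][2]$, and summing over $\elem$ with the bounded overlap of the families $\faceElem$ together with $\sigma|_F\approx\alpha p^2/h$ turns the bound into $\sum_{F\in\face}\normL[\sigma^{1/2}\jump[v]][F][2]$, up to a constant. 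Finally $\jump[v]=\jump[v-\DGC v]$ because $\DGC v$ is continuous and vanishes on $\partial\Om$, so this equals $\sum_{F\in\face}\normL[\sigma^{1/2}\jump[v-\DGC v]][F][2]\le\normDG[v-\DGC v][2]\lesssim\Aa[][v-\DGC v][v-\DGC v]$ by \eqref{coerc}, which is \eqref{jac2}. (This last chain is precisely the computation \eqref{stabvQv} from the proof of Theorem~\ref{thm:stab}, carried out with $v-\DGC v$ in place of $v$ in the jump terms, so it may simply be quoted.)
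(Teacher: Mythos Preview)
Your proof is correct and follows essentially the same route as the paper: for \eqref{jac1} you use the upper spectral bound \eqref{eigA}, the mass equivalence of Lemma~\ref{lem:massequiv}, the inverse trace inequality \eqref{invtrace} on each $v^{\xi_p}\in\mathbb{Q}^p_0(\elem)$, and coercivity, exactly as the paper does; for \eqref{jac2} you combine \eqref{eigA}, Lemma~\ref{lem:massequiv} and Lemma~\ref{lem:vQv}, which is again the paper's argument (the paper writes it via $w=(I-\DGC)w$ rather than via $\jump[v]=\jump[v-\DGC v]$, but the two are equivalent).
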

\begin{proof}
  We refer to the space decomposition \eqref{Jdecomp} and write
$$v^B = \sum_{\elem\in\mesh}\sum_{{\xi_p}\in\nodesB} v^{\xi_p}.$$ 
For the lower bound \eqref{jac1}, we employ the eigenvalue estimate \eqref{eigA} and Lemma~\ref{lem:massequiv}, thus obtaining
\begin{align}
\Aa[][v^B][v^B] \lesssim  \sum_{\elem\in\mesh}\alpha\frac{p^4}{h_\elem^2}  \normL[v^B][\elem][2]\lesssim \sum_{\elem\in\mesh}\alpha\frac{p^4}{h_\elem^2}\sum_{{\xi_p}\in\nodesB}  \normL[v^{\xi_p}][\elem][2].
\end{align}
We now observe that for any $\xi_p\in\nodesB$, $v^{\xi_p}\in \mathbb{Q}^p_0(\elem)$, and we can thus apply the inverse trace inequality \eqref{invtrace} to obtain
\begin{align}
\Aa[][v^B][v^B] \lesssim  \sum_{\elem\in\mesh}\alpha\frac{p^4}{h_\elem^2}\sum_{{\xi_p}\in\nodesB}  \normL[v^{\xi_p}][\elem][2]\lesssim \sum_{\elem\in\mesh}\alpha\frac{p^2}{h_\elem}\sum_{{\xi_p}\in\nodesB}  \normL[v^{\xi_p}][\partial\elem][2].
\end{align}
Noting that $\normL[v^{\xi_p}][\partial\elem][2]= \normL[\jump[v^{\xi_p}]][\partial\elem][2]$, it follows that
\begin{align}
\Aa[][v^B][v^B] &\lesssim \sum_{\elem\in\mesh}\alpha\frac{p^2}{h_\elem}\sum_{{\xi_p}\in\nodesB}  \normL[v^{\xi_p}][\partial\elem][2]\lesssim \sum_{\elem\in\mesh}\sum_{{\xi_p}\in\nodesB}\normL[\sigma^{1/2}\jump[v^{\xi_p}]][\partial\elem][2]\\
&\lesssim\sum_{\elem\in\mesh}\sum_{{\xi_p}\in\nodesB}\normDG[v^{\xi_p}][2],
\end{align}
and the thesis follows from the coercivity bound \eqref{coerc} and \eqref{sumPJ}.\par
With regard to the upper bound \eqref{jac2}, for the sake of simplicity we denote $w = (I-\DGC)v$, and observe that $w = (I-\DGC)w$. Since $w\in V_{hp}^B$, we write 
$$w = \sum_{\elem\in\mesh}\sum_{{\xi_p}\in\nodesB} w^{\xi_p},$$
and, from \eqref{sumPJ}, 
\begin{equation}
\Aa[][T_{B}^{-1}w][w] = \sum_{\elem\in\mesh} \sum_{{\xi_p}\in\nodesB} \Aa[][w^{\xi_p}][w^{\xi_p}].
\end{equation}
Applying again the estimate \eqref{eigA} and Lemma~\ref{lem:massequiv}, we obtain
\begin{align}
\sum_{\elem\in\mesh} \sum_{{\xi_p}\in\nodesB} \Aa[][w^{\xi_p}][w^{\xi_p}]&\lesssim \sum_{\elem\in\mesh} \sum_{{\xi_p}\in\nodesB} \alpha\frac{p^4}{h_\elem^2}\normL[w^{\xi_p}][\elem][2]\lesssim \sum_{\elem\in\mesh} \alpha\frac{p^4}{h_\elem^2}\normL[w][\elem][2]\\
&\lesssim \sum_{\elem\in\mesh} \alpha\frac{p^4}{h_\elem^2}\normL[(I-\DGC)w][\elem][2]\lesssim \Aa[][w][w],
\end{align}
where the last steps follows from Lemma~\ref{lem:vQv} and the quasi-uniformity of the mesh.
\end{proof}
\par\medskip
For the analysis of the additive preconditioner $T_C$ given
in~\eqref{ASM}, we need
several preliminary results (see \cite{Pava92} for additional details). First
of all, given the decomposition
\begin{equation}
\label{decompvC}
v = v_0+\sum_{i=1}^{N_V} v_i\qquad \forall v\in V^C_{hp},\ v_0\in V_0^C,\ v_i\in V^C_i,
\end{equation}
we define the coarse function $v_0$ as the $L^2$-projection on the space $V_0^C$, \emph{i.e.}, $v_0 := \mcal[I][0] v$ with $\mcal[I][0] v$ satisfying
\begin{align}
\normL[v-\mcal[I][0] v][\Om][2]&\lesssim h^2 |v|_{H^1(\Om)}^2,\label{approxI}\\
|\mcal[I][0] v|_{H^1(\Om)}^2&\lesssim |v|_{H^1(\Om)}^2\label{stabI},
\end{align}
for any $v\in H_0^1(\Om)$. For any $i=1,\dots,N_V$, the functions $v_i$ appearing in \eqref{decompvC} are defined as
\begin{equation}
v_i := I_p (\theta_i (v-v_0)),
\end{equation}
where $\theta_i$ is a proper partition of unity and $I_p$ is an interpolation operator, described in the following.\par
For any $\Om_i$, $i=1,\dots,N_V$, the partition of unity $\theta_i$ is such that $\theta_i\in V^C_{h1}$ and it can be defined by prescribing its values at the vertices $\{\vertex\}$ belonging to $\overline{\Om}_i$, and imposing it to be zero on $\Om\setminus\overline{\Om}_i$, see Fig.~\ref{fig:thetai} for $d=2$. More precisely,
\begin{equation}
\label{defthetai}
\theta_i(\vertex) = \left\{\begin{aligned}
1\qquad &\textnormal{ if $\vertex$ is the internal vertex or $\mcal[F][\vertex]\subset\faceB$},\\
0\qquad&\textnormal{ otherwise},
\end{aligned}\right.
\end{equation} 
with $\mcal[F][\vertex]:=\{F\in\face, F\subseteq \partial\Om_i:\vertex\in F\}$.

\begin{figure}[htb]
\centering
\includegraphics[scale=1.2]{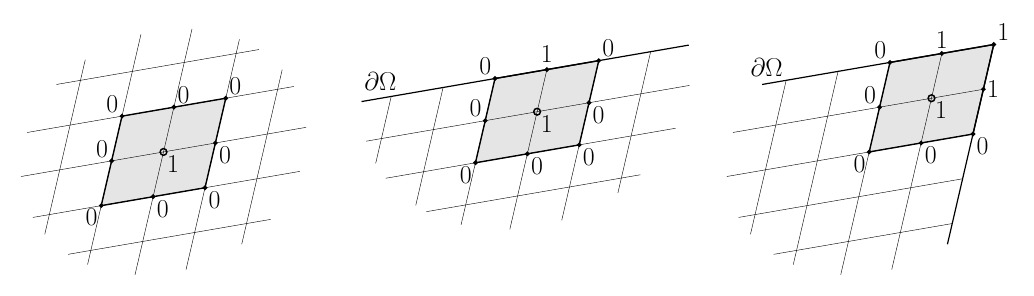}
\caption{Values of the partition of unity $\theta_i$ for $d=2$.}
\label{fig:thetai}
\end{figure}

It follows that:
\begin{equation}
\label{thetaprop}
\textnormal{supp}(\theta_i) = \Om_i,\qquad0\leq\theta_i\leq1,\qquad\sum_{i=1}^{N_V} \theta_i = 1,\qquad |\nabla\theta_i|\lesssim \frac{1}{h}.
\end{equation}
As interpolation operator $I_p$, we make use of the operator defined in \cite{Pava92}: setting $z:=v-v_0$, we define
\begin{equation}
\label{defIp}
I_p(\theta_i z)({\xi_p}) = (\theta_iz)({\xi_p})\quad\forall{\xi_p}\in\nodes,\forall\elem\in\Om_i.
\end{equation}
Notice that, despite defined locally, $I_p(\theta_i z)$ belongs to $V_{i}^C$ since the interelement continuity is guaranteed by the fact that the $(p+1)^{d-1}$ GLL points on a face uniquely determine a tensor product polynomial of degree $p$ defined on that face. The following result holds. 

\begin{lemma}[{\cite[{Lemma~3.1, Lemma~3.3}]{Pava92}}]
The interpolation operator\linebreak$I_p:\mathbb{Q}^{p+1}(\elemref)\rightarrow\mathbb{Q}^{p}(\elemref)$, defined in \eqref{defIp}, is bounded uniformly in the $H^1$ seminorm, \emph{i.e.},
\begin{equation}
\label{Ip}
|I_p(u)|_{H^1(\elemref)}\lesssim |u|_{H^1(\elemref)}\qquad\forall u\in \mathbb{Q}^{p+1}(\elemref).
\end{equation}
\end{lemma}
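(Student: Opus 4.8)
The plan is to prove the $H^1$-seminorm stability of the interpolation operator $I_p$ defined in \eqref{defIp} by reducing everything to the one-dimensional situation on the reference interval and then using a tensor-product argument. Since $I_p$ preserves values at the GLL nodes $\nodes$, the key fact is that $\mathbb{Q}^p(\elemref)$ functions are uniquely determined by their values at the $(p+1)^d$ GLL nodes, so $I_p(u)$ is exactly the nodal GLL interpolant of $u\in\mathbb{Q}^{p+1}(\elemref)$ into $\mathbb{Q}^p(\elemref)$. First I would recall from Section~\ref{sec:GLL} the norm equivalence \eqref{GLLequiv}, namely $\|v\|_{0,p,\elemref}^2\approx\normL[v][\elemref][2]$ for $v\in\mathbb{Q}^p(\elemref)$, which lets one pass between the discrete GLL quadrature norm and the genuine $L^2$ norm at the cost of constants independent of $p$.

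The core of the argument is the one-dimensional estimate: for the GLL interpolation operator $i_p:\mathbb{Q}^{p+1}([-1,1])\to\mathbb{Q}^p([-1,1])$ one has $|i_p(w)|_{H^1([-1,1])}\lesssim|w|_{H^1([-1,1])}$ uniformly in $p$. This is precisely the content of \cite[Lemma~3.1, Lemma~3.3]{Pava92}, and the proof there proceeds by writing $w'$ in a Legendre expansion, using that $i_p(w)$ differs from $w$ only by the removal of the top degree, and exploiting the exactness of GLL quadrature on $\mathbb{Q}^{2p-1}$ together with bounds on the derivatives of Legendre polynomials at the endpoints; the essential saving is that the $L^2$-to-$\|\cdot\|_{0,p}$ discrepancy only affects the leading Legendre coefficient. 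I would then lift this to dimension $d$ by noting that $\partial_{x_j} I_p(u)$ in direction $j$ is obtained by applying the one-dimensional interpolant in each of the other $d-1$ directions (whose stability in $L^2$ follows from \eqref{GLLequiv}) and then differentiating; the tensor structure of both $\mathbb{Q}^{p+1}$ and the GLL grid means the directions decouple, so one picks up only a bounded product of the one-dimensional constants. Summing over $j=1,\dots,d$ gives \eqref{Ip}.

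The main obstacle is the sharp one-dimensional estimate, i.e.\ controlling the endpoint/high-frequency behavior of the GLL interpolant uniformly in $p$: a naive bound would lose a power of $p$ because $\|v'\|_{0,p}$ is not literally $\normL[v'][\elemref]$, and one must use the special structure of the top Legendre mode to recover uniformity. Since this is exactly \cite[Lemma~3.1, Lemma~3.3]{Pava92}, the cleanest route is to quote that result directly for $d=1$ and only supply the (routine, constant-preserving) tensor-product bookkeeping for $d=2,3$, relying on \eqref{GLLequiv} to handle the transverse interpolations. Accordingly, the proof is short: invoke the one-dimensional lemma from \cite{Pava92}, observe that $I_p$ acts as a tensor product of such one-dimensional operators on the reference cube, and conclude \eqref{Ip} by combining the directional estimates with the $p$-independent equivalence constants from Section~\ref{sec:GLL}.
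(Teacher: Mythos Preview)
The paper does not supply a proof of this lemma: it is stated with the attribution \cite[Lemma~3.1, Lemma~3.3]{Pava92} and immediately used, with no \texttt{proof} environment. Your sketch---reducing to the one-dimensional GLL interpolant and then assembling the $d$-dimensional bound by a tensor-product argument---is precisely the structure of the cited results in \cite{Pava92} (Lemma~3.1 is the one-dimensional $H^1$ stability, Lemma~3.3 the tensorization), so your approach coincides with the one the paper defers to.

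One small caution on the bookkeeping: when you invoke $L^2$ stability of the transverse one-dimensional interpolants via \eqref{GLLequiv}, note that \eqref{GLLequiv} as stated is for $v\in\mathbb{Q}^p$, whereas the input in those directions lies in $\mathbb{Q}^{p+1}$; you need the (equally standard) bound $\|i_p w\|_{L^2}\lesssim\|w\|_{L^2}$ for $w\in\mathbb{Q}^{p+1}$, which follows from the GLL quadrature estimate on $\mathbb{Q}^{2p+2}$ (see, e.g., \cite{Canetal06}). This is routine but worth stating explicitly rather than pointing only to \eqref{GLLequiv}.
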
\par
Once the partition of unity and the interpolation operator are
defined, we are able to complete the analysis of $T_C$. In analogy to
Lemma~\ref{thm:jac}, which is based on \eqref{sumPJ}, we now use
\eqref{sumPC} and the above auxiliary results to show the following lemma.
\begin{lemma}
\label{thm:PC}
Let $T_C$ denote the two-level overlapping additive Schwarz preconditioner defined in \eqref{ASM}. Then there exist two positive constants $\mathsf{C}_1^C$ and $\mathsf{C}_2^C$, independent of the discretization parameters, \emph{i.e.}, the granularity of the mesh $h$ and the polynomial approximation degree $p$, such that
\begin{align}
\Aa[][T_C^{-1}v][v]&\geq\mathsf{C}_1^C\Aa[][v][v]\label{add1}\\
\Aa[][T_C^{-1}v][v] &\leq \mathsf{C}_2^C\Aa[][v][v]\label{add2},
\end{align}
for any $v\in V_{hp}^C$.
\end{lemma}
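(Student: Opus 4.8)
The statement asserts uniform (in $h$ and $p$) spectral bounds for the two-level overlapping additive Schwarz preconditioner $T_C$ on the \emph{conforming} space $V_{hp}^C$. The key observation is that on $V_{hp}^C$ the bilinear form $\Aa[][\cdot][\cdot]$ reduces (up to the spectrally equivalent form, cf. \eqref{tildeequiv}) to the standard $H^1_0(\Om)$ inner product, since all jumps $\jump[v]$ and all lifting terms vanish for $v\in C^0(\overline\Om)\cap V_{hp}$ with $v|_{\partial\Om}=0$. Hence $\Aa[][v][v]\approx \seminormH[v][\Om][2]$ for $v\in V_{hp}^C$, and the claim is precisely the classical estimate for the Pavarino two-level overlapping Schwarz preconditioner for high-order conforming discretizations, which is exactly the setting of~\cite{Pava92}. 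So the plan is to reduce to that known result via the tools already assembled in the excerpt.

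For the \textbf{upper bound} \eqref{add2}, I would use the finite-overlap (coloring) argument: each subdomain $\Om_i$ overlaps only a bounded number of other $\Om_j$'s (bounded by shape-regularity), so by a standard Cauchy–Schwarz/coloring argument one has $\Aa[][T_C^{-1}v][v]=\inf_{\sum v_i=v}\sum_i \Aa[][v_i][v_i]\le \sum_i \Aa[][v_i][v_i]$ for \emph{any} admissible decomposition, and in particular the stability of the additive operator in the other direction follows from bounding the number of overlapping patches; more precisely $\Aa[][(P_0+P_V)P_C v][v]\le C\Aa[][v][v]$ with $C$ depending only on the coloring constant. This direction does not require the interpolation operator $I_p$ and is $h,p$-uniform essentially for free from \eqref{sumPC} together with a bounded-overlap estimate and the coercivity/continuity of Lemma~\ref{lem:contcoerc}.

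For the \textbf{lower bound} \eqref{add1}, which is the substantive part, I would exhibit an explicit stable decomposition $v=v_0+\sum_{i=1}^{N_V}v_i$ using exactly the construction already set up in the excerpt: $v_0:=\mcal[I][0]v$ the $L^2$-projection onto the piecewise-linear coarse space, and $v_i:=I_p(\theta_i(v-v_0))$. By \eqref{sumPC} it suffices to show $\seminormH[v_0][\Om][2]+\sum_{i=1}^{N_V}\seminormH[v_i][\Om][2]\lesssim \seminormH[v][\Om][2]$. The coarse part is controlled by \eqref{stabI}. For the local parts, write $z:=v-v_0$ and on each $\Om_i$ estimate $\seminormH[I_p(\theta_i z)][\Om_i]$ by first applying the $H^1$-stability of $I_p$ from \eqref{Ip} (after pulling back to the reference element via the affine map, using shape-regularity so that the seminorm equivalence constants are $h$-uniform), reducing to $\seminormH[\theta_i z][\Om_i][2]$; then use the product rule $\nabla(\theta_i z)=\theta_i\nabla z+z\nabla\theta_i$ together with $0\le\theta_i\le1$ and $|\nabla\theta_i|\lesssim 1/h$ from \eqref{thetaprop}, so that $\seminormH[\theta_i z][\Om_i][2]\lesssim \seminormH[z][\Om_i][2]+h^{-2}\normL[z][\Om_i][2]$. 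Summing over $i$, the finite-overlap property gives $\sum_i\seminormH[z][\Om_i][2]\lesssim \seminormH[z][\Om][2]$ and $\sum_i h^{-2}\normL[z][\Om_i][2]\lesssim h^{-2}\normL[z][\Om][2]=h^{-2}\normL[v-\mcal[I][0]v][\Om][2]\lesssim \seminormH[v][\Om][2]$ by the $L^2$-approximation estimate \eqref{approxI}. Combining with $\seminormH[z][\Om][2]\lesssim \seminormH[v][\Om][2]+\seminormH[v_0][\Om][2]\lesssim\seminormH[v][\Om][2]$ yields the stable splitting, hence \eqref{add1}.

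The \textbf{main obstacle} is ensuring that \emph{all} constants are genuinely independent of $p$: the coarse estimates \eqref{approxI}–\eqref{stabI} are classical and $p$-free, and the finite-overlap constant is purely geometric, so the crux is the $p$-uniform $H^1$-stability of $I_p$ — but this is precisely what \eqref{Ip} (from~\cite{Pava92}) provides, its proof relying on GLL-quadrature properties analogous to those collected in Section~\ref{sec:GLL}. One subtlety to handle carefully is that $\theta_i z$ is a piecewise tensor-product polynomial of degree $p+1$ (product of degree-$1$ and degree-$p$ factors), which is why $I_p$ is stated on $\mathbb{Q}^{p+1}(\elemref)$; and one must check that the interelement-continuity claim following \eqref{defIp} indeed makes each $v_i\in V_i^C$, so that \eqref{sumPC} is applicable. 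Beyond that, the argument is an assembly of the lemmas already in hand together with the standard Schwarz machinery, and I would present it at the level of detail above rather than reproducing the proof of \eqref{Ip}.
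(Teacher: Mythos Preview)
Your two arguments are each correct in content and match the paper's approach exactly, but you have assigned them to the wrong inequalities. Recall the identity \eqref{sumPC}: $\Aa[][T_C^{-1}v][v]=\inf_{\sum v_i=v}\sum_i\Aa[][v_i][v_i]$. To prove the \emph{upper} bound \eqref{add2} you must exhibit \emph{one} decomposition with $\sum_i\Aa[][v_i][v_i]\lesssim\Aa[][v][v]$; this is the stable-splitting argument using $v_0=\mcal[I][0]v$, $v_i=I_p(\theta_i(v-v_0))$, the $H^1$-stability \eqref{Ip}, the partition-of-unity bounds \eqref{thetaprop}, and the coarse estimates \eqref{approxI}--\eqref{stabI}. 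Conversely, to prove the \emph{lower} bound \eqref{add1} you must show that \emph{every} decomposition satisfies $\sum_i\Aa[][v_i][v_i]\gtrsim\Aa[][v][v]$, which is exactly the finite-overlap/coloring argument (expand $\Aa[][v][v]=\sum_{i,j}\Aa[][v_i][v_j]$ and use that each $\Om_i$ meets only boundedly many $\Om_j$). Your own sentence ``$\Aa[][(P_0+P_V)P_C v][v]\le C\Aa[][v][v]$'' is a bound on $T_C$ from above, hence on $T_C^{-1}$ from \emph{below}, confirming the swap.

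Once you interchange the labels, your write-up coincides with the paper's proof: the paper proves \eqref{add1} via the overlap argument and \eqref{add2} via the explicit stable decomposition, with the same ingredients and at the same level of detail you propose. Your identification of the $p$-uniform $H^1$-stability of $I_p$ on $\mathbb{Q}^{p+1}(\elemref)$ as the crux, and your remark that $\theta_i z\in\mathbb{Q}^{p+1}$ elementwise with $I_p(\theta_i z)\in V_i^C$ by the face-determination property, are both on point and are precisely how the paper (following \cite{Pava92}) proceeds.
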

\begin{proof}
We first prove the lower bound~\eqref{add1}, and given the decomposition \eqref{decompvC}, we can write
\begin{equation}
\Aa[][v][v] = \sum_{i,j=0}^{N_V} \Aa[][v_i][v_j]\lesssim \Aa[][v_0][v_0] + \sum_{i,j=1}^{N_V} \Aa[][v_i][v_j].
\end{equation} 
We now note that $\Aa[][v_i][v_j]\neq 0$ only if $i=j$ and $\Om_i\cap\Om_j\neq\emptyset$, and since each $\Om_i$ is overlapped by a limited number of neighboring subdomains, we conclude that 
\begin{align}
\Aa[][v][v] &\lesssim \Aa[][v_0][v_0]+ \sum_{i,j=1}^{N_V}\Aa[][v_i][v_j]\lesssim\Aa[][v_0][v_0]+ \sum_{i=1}^{N_V}\Aa[][v_i][v_i].
\end{align}
Inequality \eqref{add1} follows from the bound above and \eqref{sumPC}, denoting with $\mathsf{C}_1^C$ the hidden constant.\par 
Note that, from \eqref{sumPC}, the upper bound \eqref{add2} is proved provided the following inequality holds
\begin{equation}
\label{stablesplit}
\sum_{i=0}^{N_V} \Aa[][v_i][v_i] \leq \mathsf{C}_2^C \Aa[][v][v]\qquad\forall v\in V_{hp}^C.
\end{equation}
We recall that $v_0 =\mcal[I][0]v$, and from \eqref{stabI} it follows that
\begin{equation}
\label{A0}
\Aa[][v_0][v_0]=\Aa[][\mcal[I][0]v][\mcal[I][0]v]\lesssim \Aa[][v][v].
\end{equation}
For $i = 1,\dots, N_V$, we have $v_i = I_p(\theta_i z)$, with $z=v-v_0$, and by \eqref{Ip}, we obtain
\begin{equation}
|v_i|_{H^1(\elem')}^2\lesssim |\theta_i z|_{H^1(\elem')}^2 \lesssim \sum_{j=1}^d \left\|\frac{\partial\theta_i}{\partial x_j}z+\theta_i\frac{\partial z}{\partial x_j}\right\|_{L^2(\elem')}^2,
\end{equation}
for any $\elem'\in\Om_i$. By \eqref{thetaprop} it holds that
\begin{align}
|\nabla\theta_i|\lesssim \frac{1}{h},\qquad\|\theta_i\|_{L^\infty}\leq 1,
\end{align}
hence,
\begin{align}
|v_i|_{H^1(\elem')}^2\lesssim \frac{1}{h^2}\normL[z][\elem'][2]+\sum_{j=1}^d \normL[\frac{\partial z}{\partial x_j}][\elem'][2]\lesssim \frac{1}{h^2}\normL[v-v_0][\elem'][2]+|v-v_0|^2_{H^1(\elem')}.
\end{align}
On any element $\elem'$, a limited number of components $v_i$ are different from zero (at most four for $d=2$, and eight for $d=3$), which implies that we can sum over all the components $v_i$, $i = 1,\dots,N_V$, and then over all the elements, thus obtaining
\begin{align}
\sum_{i=1}^{N_V}|v_i|_{H^1(\Om)}^2&\lesssim \frac{1}{h^2}\normL[v-v_0][\Om][2]+|v-v_0|^2_{H^1(\Om)}\lesssim |v|^2_{H^1(\Om)},
\end{align}
where the last step follows from \eqref{approxI} and \eqref{stabI}. The addition of the above result and \eqref{A0}, gives \eqref{stablesplit}, denoting with $\mathsf{C}_2^C$ the resulting hidden constant.
\end{proof}\par

\subsection{Condition number estimates: global preconditioner}
We are now ready to prove the main result of the paper regarding the
condition number of the preconditioned problem.
\begin{theorem}
\label{thm:main}
Let $T_{DG}$ be defined as in \eqref{PDG}. Then, for any $v\in V_{hp}$, it holds that
\begin{equation}
\label{main}
\Aa[][v][v]\lesssim \Aa[][T_{DG}^{-1}v][v] \lesssim \Aa[][v][v],
\end{equation}
where the hidden constants are independent of the discretization parameters, \emph{i.e.}, the mesh size $h$, the polynomial approximation degree $p$, and the penalization coefficient $\alpha$. 
\end{theorem}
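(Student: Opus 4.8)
The plan is to read off both inequalities in \eqref{main} directly from the parallel subspace correction identity \eqref{sumPDG}, feeding it the two-sided subspace estimates of Lemma~\ref{thm:jac} and Lemma~\ref{thm:PC} together with the stability of the coarse/fine splitting established in Theorem~\ref{thm:stab}. Throughout I use that, by Lemma~\ref{lem:contcoerc}, the form $\Aa[][\cdot][\cdot]$ is symmetric and positive definite on $V_{hp}$, hence an inner product, so the associated energy norm obeys the triangle inequality; in particular, $\Aa[][v^B+v^C][v^B+v^C]\le 2\,\Aa[][v^B][v^B]+2\,\Aa[][v^C][v^C]$ for all $v^B,v^C\in V_{hp}$.

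\emph{Upper bound.} Since the right-hand side of \eqref{sumPDG} is an infimum over all splittings $v=v^B+v^C$ with $v^B\in V_{hp}^B$ and $v^C\in V_{hp}^C$, I would test it with the particular choice $v^B=v-\DGC v$, $v^C=\DGC v$, which is admissible because $v-\DGC v\in V_{hp}^B$ by the remark following \eqref{defQ} and $\DGC v\in V_{hp}^C$ by construction. This gives
\[
\Aa[][T_{DG}^{-1}v][v]\le \Aa[][T_B^{-1}(v-\DGC v)][v-\DGC v]+\Aa[][T_C^{-1}\DGC v][\DGC v].
\]
Now \eqref{jac2} applies verbatim to the first term (it is stated precisely for arguments of the form $v-\DGC v$) and \eqref{add2} applies to the second term (with $\DGC v\in V_{hp}^C$), whence
\[
\Aa[][T_{DG}^{-1}v][v]\le \mathsf{C}_2^J\,\Aa[][v-\DGC v][v-\DGC v]+\mathsf{C}_2^C\,\Aa[][\DGC v][\DGC v]\lesssim \Aa[][v-\DGC v][v-\DGC v]+\Aa[][\DGC v][\DGC v],
\]
and the stability bound \eqref{stab} of Theorem~\ref{thm:stab} estimates the last sum by $\Aa[][v][v]$.

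\emph{Lower bound.} Here the point is that the infimum in \eqref{sumPDG} is bounded below uniformly. Let $v=v^B+v^C$ be \emph{any} admissible splitting. The lower bounds \eqref{jac1} and \eqref{add1} give
\[
\Aa[][T_B^{-1}v^B][v^B]+\Aa[][T_C^{-1}v^C][v^C]\ge \mathsf{C}_1^J\,\Aa[][v^B][v^B]+\mathsf{C}_1^C\,\Aa[][v^C][v^C]\ge \min(\mathsf{C}_1^J,\mathsf{C}_1^C)\bigl(\Aa[][v^B][v^B]+\Aa[][v^C][v^C]\bigr),
\]
while the triangle inequality above yields $\Aa[][v][v]=\Aa[][v^B+v^C][v^B+v^C]\le 2\bigl(\Aa[][v^B][v^B]+\Aa[][v^C][v^C]\bigr)$. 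Hence every admissible pair satisfies $\Aa[][T_B^{-1}v^B][v^B]+\Aa[][T_C^{-1}v^C][v^C]\ge\tfrac12\min(\mathsf{C}_1^J,\mathsf{C}_1^C)\,\Aa[][v][v]$; taking the infimum and invoking \eqref{sumPDG} gives $\Aa[][v][v]\lesssim\Aa[][T_{DG}^{-1}v][v]$. All constants entering the argument --- $\mathsf{C}_1^J,\mathsf{C}_2^J$ of Lemma~\ref{thm:jac}, $\mathsf{C}_1^C,\mathsf{C}_2^C$ of Lemma~\ref{thm:PC}, and the hidden constant of Theorem~\ref{thm:stab} --- are independent of $h$, $p$ and $\alpha$, so the two constants in \eqref{main} are too; since $T_{DG}$ is symmetric positive definite with respect to $\Aa[][\cdot][\cdot]$, \eqref{main} is precisely a uniform bound on its spectral condition number.

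\emph{Main obstacle.} Essentially all the analytic work is already contained in Theorem~\ref{thm:stab} (via the Oswald quasi-interpolant $\DGC$, Lemma~\ref{lem:vQv} and the eigenvalue bound \eqref{eigA}) and in the two subspace lemmas, so the proof of Theorem~\ref{thm:main} is a short assembly through \eqref{sumPDG}. The one place that needs care --- and where a careless attempt would stumble --- is the interplay of the two directions: the upper bound must be obtained by testing \eqref{sumPDG} with the \emph{specific} splitting $v^B=v-\DGC v$, $v^C=\DGC v$, so that \eqref{jac2} is applicable exactly as stated, whereas the lower bound must hold for an \emph{arbitrary} splitting in order for the infimum in \eqref{sumPDG} to be controlled.
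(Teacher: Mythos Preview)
Your proof is correct and follows essentially the same route as the paper: the upper bound is obtained by testing \eqref{sumPDG} with the specific splitting $v^B=v-\DGC v$, $v^C=\DGC v$ and invoking \eqref{jac2}, \eqref{add2} and \eqref{stab}, while the lower bound follows from \eqref{jac1}, \eqref{add1} and the triangle inequality applied to an arbitrary splitting. Your exposition is in fact slightly more explicit than the paper's in making the triangle-inequality step and the role of the infimum transparent.
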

\begin{proof}
To prove the upper bound, we first consider the identity
\eqref{sumPDG}. Recalling that, by definition \eqref{defQ}, $v-\DGC v\in V_{hp}^B$, for any $v\in
V_{hp}$, we obtain
\begin{align}
\Aa[][T_{DG}^{-1}v][v] &= \inf_{\substack{v^B\in V_{hp}^B,v^C\in V_{hp}^C\\v^B+v^C=v}} \left[\Aa[][T_{B}^{-1}v^B][v^B]+\Aa[][T_C^{-1}v^C][v^C]\right]\\
&\leq \Aa[][T_{B}^{-1}(v-\DGC v)][v-\DGC v]+\Aa[][T_C^{-1}\DGC v][\DGC v].
\end{align} 
From the bounds \eqref{jac2} and \eqref{add2} for $\DGC v$, it follows that
\begin{align}
\Aa[][T_{DG}^{-1}v][v] &\leq\Aa[][T_{B}^{-1}(v-\DGC v)][v-\DGC v]+\Aa[][T_C^{-1}\DGC v][\DGC v]\\
&\lesssim\Aa[][v-\DGC v][v-\DGC v]+\Aa[][\DGC v][\DGC v]\lesssim \Aa[][v][v],
\end{align}
where the last step follows from \eqref{stab}. The lower bound follows from \eqref{sumPDG}, the bounds \eqref{jac1} and \eqref{add1}, and a triangle inequality
 \begin{align}
\Aa[][T_{DG}^{-1}v][v] &= \inf_{\substack{v^B\in V_{hp}^B,v^C\in
                         V_{hp}^C\\v^B+v^C=v}}
   \left[\Aa[][T_{B}^{-1}v^B][v^B]+\Aa[][T_C^{-1}v^C][v^C]\right]\\
&\gtrsim \inf_{v^C\in V_{hp}^C} \left[\Aa[][v^B][v^B]+\Aa[][v^C][v^C]\right]\gtrsim \Aa[][v][v].
\end{align}
\end{proof}

\section{Numerical experiments}
\label{sec:num}
In this section we present some numerical tests to verify the
theoretical estimates provided in Lemma~\ref{thm:jac},
Lemma~\ref{thm:PC} and Theorem~\ref{thm:main}. We consider problem
\eqref{DG} in the two dimensional case with $\Om=(-1,1)^2$ and SIPG
and LDG discretizations. For the first experiment, we set
$h = 0.0625$, the penalization parameter $\alpha = 10$ and
$\boldsymbol{\beta}=\boldsymbol{1}$ for the LDG method. In Table
\ref{tab:constants}, we show the numerical evaluation of the constants
$\mathsf{C}_1^J$ and $\mathsf{C}_2^J$ of Lemma~\ref{thm:jac} and
$\mathsf{C}_1^C$ and $\mathsf{C}_2^C$ of Lemma~\ref{thm:PC}, as a
function of the polynomial order employed in the discretization: the
constants are independent of $p$, as expected from theory. With regard
to the constants $\mathsf{C}_1^C$ and $\mathsf{C}_2^C$, we observe
that the values are the same for both the SIPG and LDG methods, since
the preconditioner on the conforming subspace reduces to the same
operator regardless of the DG scheme employed.

\begin{table}[H]
  \centering
  \caption{Left and middle: numerical evaluation of the constants $\mathsf{C}_1^J$ and $\mathsf{C}_2^J$ of Lemma~{\rm\ref{thm:jac}} as a function of $p$ for the SIPG and LDG methods; right: numerical evaluation of the constants $\mathsf{C}_1^C$ and $\mathsf{C}_2^C$ of Lemma~{\rm\ref{thm:PC}} as a function of $p$}
  {
\begin{tabular}{lcc||cc||cc}
\hline
\bigstrut[t] &\multicolumn{2}{c||}{SIPG ($\alpha=10$, $\boldsymbol{\beta}=\boldsymbol{0}$)}& \multicolumn{2}{c||}{LDG ($\alpha=10$, $\boldsymbol{\beta}=\boldsymbol{1}$)}& \multicolumn{2}{c}{}\\
\hline
\bigstrut[t]& $\mathsf{C}_1^J$ & $\mathsf{C}_2^J$ & $\mathsf{C}_1^J$ & $\mathsf{C}_2^J$ & $\mathsf{C}_1^C$ & $\mathsf{C}_2^C$\\
\hline
\bigstrut[t]$p = 2$ & 0.4036 & 3.0084 & 0.3844 & 3.6393 & 0.2500 & 1.1606\\
$p = 3$ & 0.4343 & 2.9133 & 0.4129 & 3.3232 & 0.2500 & 1.0742\\
$p = 4$ & 0.4502 & 2.8304 & 0.4298 & 3.1487 & 0.2500 & 1.0934\\
$p = 5$ & 0.4605 & 2.7633 & 0.4410 & 3.0321 & 0.2500 & 1.0820\\
$p = 6$ & 0.4674 & 2.7088 & 0.4489 & 2.9467 & 0.2500 & 1.0854\\
\hline
\end{tabular}
}
\label{tab:constants}
\end{table}

Table \ref{tab:condDG} shows a comparison of the spectral condition number of the original system ($\mathsf{K}(A)$) and of the preconditioned one ($\mathsf{K}(T_{DG})$). While the former grows as $p^4$, cf. \cite{AntHou}, the latter is constant with $p$, as stated in \eqref{main}. The theoretical results are further confirmed by the number of iterations $N_{iter}^{PCG}$ and $N_{iter}^{CG}$ of the Preconditioned Conjugate Gradient (PCG) and the Conjugate Gradient (CG), respectively, needed to reduce the initial relative residual of a factor of $10^{-8}$.

\begin{table}[htb!]
\centering
\caption{Condition number of the unpreconditioned ($\mathsf{K}(A)$) and preconditioned ($\mathsf{K}(T_{DG})$) linear systems of equations and corresponding CG ($N_{iter}^{CG}$) and PCG ($N_{iter}^{PCG}$) iteration counts as a function of $p$ for the SIPG and LDG methods}
{\setlength{\tabcolsep}{5pt}
\begin{tabular}{lcccc||cccc}
\hline
\bigstrut[t] &\multicolumn{4}{c||}{SIPG ($\alpha=10$, $\boldsymbol{\beta}=\boldsymbol{0}$)}&\multicolumn{4}{c}{LDG ($\alpha=10$, $\boldsymbol{\beta}=\boldsymbol{1}$)}\\
\hline
\bigstrut[t]& $\mathsf{K}(A)$ & $N_{iter}^{CG}$ & $\mathsf{K}(T_{DG})$& $N_{iter}^{PCG}$ & $\mathsf{K}(A)$ & $N_{iter}^{CG}$ & $\mathsf{K}(T_{DG})$& $N_{iter}^{PCG}$
\\
\hline
\bigstrut[t]$p=2$ & $5.26\cdot 10^3$ & 284 & 14.26 & 27 & $8.88\cdot 10^3$ & 392 & 35.02 & 36\\
$p=3$ & $1.52\cdot 10^4$ & 450 & 14.22 & 25 & $2.29\cdot 10^4$ & 556 & 38.29 & 31\\
$p=4$ & $3.38\cdot 10^4$ & 684 & 14.72 & 26 & $4.89\cdot 10^4$ & 851 & 37.74 & 33\\
$p=5$ & $6.27\cdot 10^4$ & 919 & 15.35 & 24 & $8.83\cdot 10^4$ & 1137 & 38.37 & 30\\
$p=6$ & $1.05\cdot 10^5$ & 1200 & 15.98 & 25 & $1.45\cdot 10^5$ & 1482 & 42.65 & 32\\
\hline
\end{tabular}
}
\label{tab:condDG}
\end{table}

The second numerical experiment aims at verifying the uniformity of the proposed preconditioner with respect to the penalization coefficient $\alpha$. In this case, we consider the same test case presented above, but we now fix the polynomial approximation degree $p=2$ and increase  $\alpha$. The numerical data obtained are reported in Table~\ref{tab:condDG_vs_alpha}: as done before, we compare the spectral condition numbers of the unpreconditioned and preconditioned systems and the iteration counts of the CG and PCG methods. As predicted from theory, while $\mathsf{K}(A)$ grows like $\alpha$, the values of $\mathsf{K}(T_{DG})$ are constant.

\begin{table}[htb!]
\centering
\caption{Condition number of the unpreconditioned ($\mathsf{K}(A)$) and preconditioned ($\mathsf{K}(T_{DG})$) linear systems of equations and corresponding CG ($N_{iter}^{CG}$) and PCG ($N_{iter}^{PCG}$) iteration counts as a function of $\alpha$ for the SIPG and LDG methods}
{
\setlength{\tabcolsep}{4.5pt}
\begin{tabular}{lcccc||cccc}
\hline
\bigstrut[t]&\multicolumn{4}{c||}{SIPG ($p=2$, $\boldsymbol{\beta}=\boldsymbol{0}$)}&\multicolumn{4}{c}{LDG ($p=2$, $\boldsymbol{\beta}=\boldsymbol{1}$)}\\
\hline
\bigstrut[t]& $\mathsf{K}(A)$ & $N_{iter}^{CG}$ & $\mathsf{K}(T_{DG})$& $N_{iter}^{PCG}$ & $\mathsf{K}(A)$ & $N_{iter}^{CG}$ & $\mathsf{K}(T_{DG})$& $N_{iter}^{PCG}$\\
\hline
\bigstrut[t]$\alpha=2$ & $1.04\cdot 10^3$ & 137 & 12.66 & 28 & $4.55\cdot 10^3$ & 297 & 62.54 & 47\\
$\alpha=5$ & $2.62\cdot 10^3$ & 205 & 13.02 & 28 & $6.17\cdot 10^3$ & 338 & 41.94 & 39\\
$\alpha=10$ & $5.26\cdot 10^3$ & 284 & 14.26 & 27 & $8.88\cdot 10^3$ & 392 & 35.02 & 36\\
$\alpha=10^2$ & $5.41\cdot 10^4$ & 690 & 15.73 & 28 & $5.78\cdot 10^4$ & 717 & 29.32 & 31\\
$\alpha=10^3$ & $5.44\cdot 10^5$ & 1116 & 15.90 & 28 & $5.47\cdot 10^5$ & 1142 & 28.92 & 30\\
$\alpha=10^4$ & $5.44\cdot 10^6$ & 1509 & 15.91 & 28 & $5.44\cdot 10^6$ & 1518 & 28.89 & 30\\
\hline
\end{tabular}
}
\label{tab:condDG_vs_alpha}
\end{table}

\FloatBarrier
{
\Appendix
\section{Proof of Lemma~\ref{lem:vQv}}
We first introduce some additional notation. For any $\elem\in\mesh$, we define $\partial \elem_{d-1}$ as the set of $(d-1)$-dimensional affine varieties in $\partial \elem$, and $\partial_\ell \elem$, $\ell \in \{d-2,\dots,0\}$, as the set obtained as the intersection of two distinct elements in $\partial_{\ell+1} \elem$. We observe that $\partial\elem_{d-1}\subset\face$ for any $\elem\in\mesh$. The set of nodes of each element \elem can be further decomposed as
\begin{equation}
\label{decompN}
 \nodes = \nodesI\cup\nodesB = \nodesI\cup\bigcup_{\ell = 0}^{d-1} \mathcal{V}_\ell,
\end{equation}
with \mcal[V][\ell], $\ell \in \{d-1,\dots,0\}$, representing the set of interior nodes of $\partial \elem_\ell$ (see Fig.~\ref{fig:nodes}).\\

\begin{figure}[htb]
\centering
\includegraphics[scale=0.20]{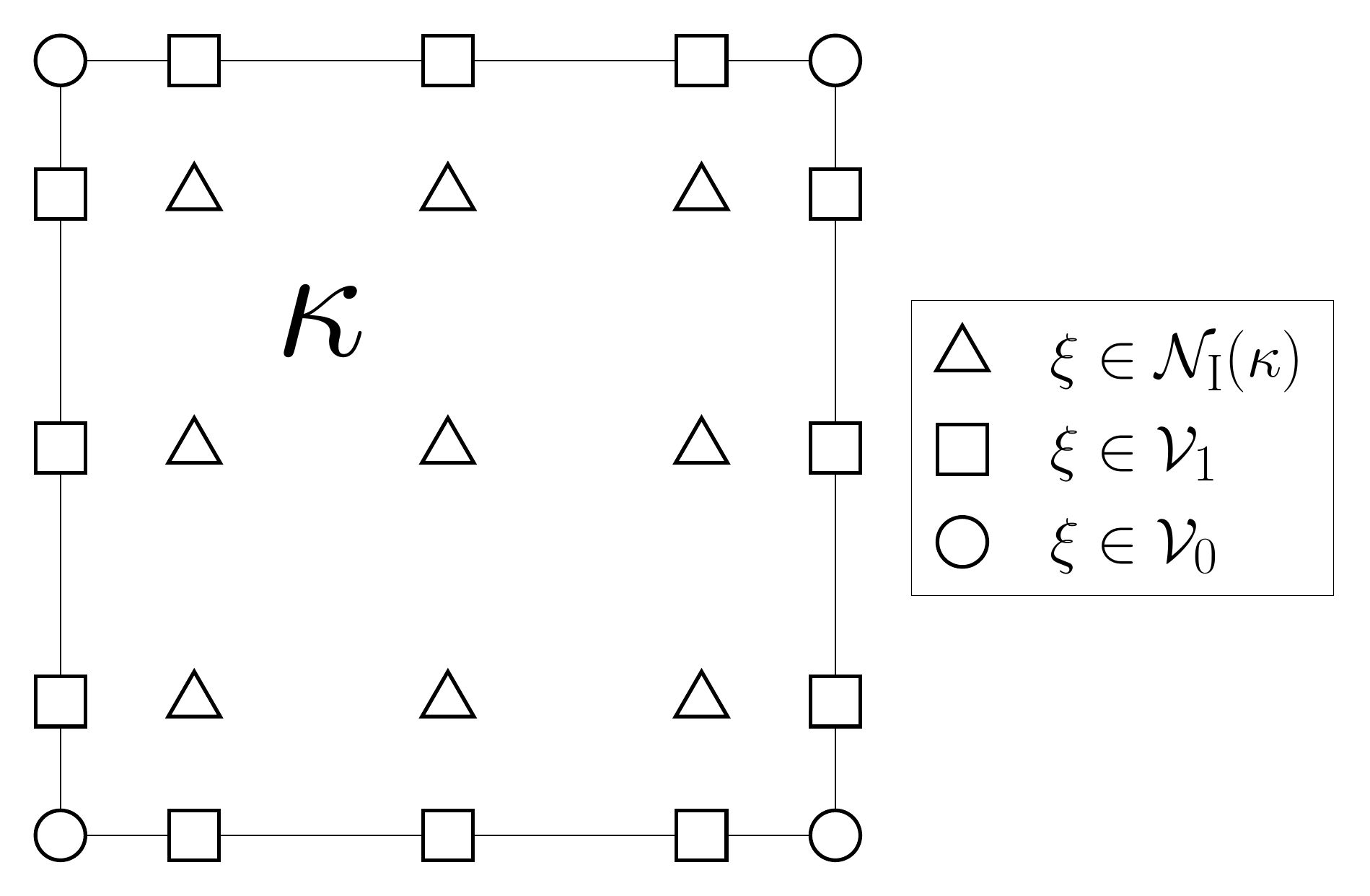}
\caption{Distribution of the nodes ${\xi_p}\in\nodes$ for $p=4$, $d=2$.}
\label{fig:nodes}
\end{figure}

In analogy to $\mathbb{Q}^{p}_0(\elem)$, we define $\mathbb{Q}^{p}_0(\partial \elem_\ell)$, $\ell \in \{d-1,\dots,1\}$, as
\begin{align}
\mathbb{Q}^{p}_0(\partial \elem_\ell) :=\{v\in \mathbb{Q}^{p}(\partial \elem_\ell): v( {\xi_p})=0\quad\forall{\xi_p}\in\mcal[V][\ell]\},
\end{align}
and remark that a corresponding form of the trace and inverse trace inequalities of Lemma~\ref{lem:trace_invtrace} can be obtained on $\mathbb{Q}^{p}(\elem_\ell)$ and $\mathbb{Q}^{p}_0(\partial \elem_\ell)$. 
\par\medskip
The proof of Lemma~\ref{lem:vQv} can be found in  \cite[Lemma~3.2]{BurErn07}. Here we reproduce the same steps, with only minor changes, mainly regarding the notation.\\
We denote $w = (v-\DGC v)|_\elem$ and observe that, according to \eqref{defQ}, it holds
$$w({\xi_p})=0\qquad \forall{\xi_p}\in\nodesI.$$
Given the set of nodes ${\xi_p}\in \nodes$ and the associated Lagrangian nodal basis functions $\{\phi_{\xi_p}\}$, we can write
$$w = \sum_{{\xi_p}\in\nodesB} w({\xi_p})\phi_{\xi_p},$$
cf. \eqref{vdecomp}. From the decomposition \eqref{decompN}, it follows that
\begin{equation}
\label{w}
w = \sum_{{\xi_p}\in\mathcal{V}_{d-1}} w({\xi_p})\phi_{\xi_p}+\sum_{\ell=0}^{d-2}\sum_{{\xi_p}\in\mcal[V][\ell]} w({\xi_p})\phi_{\xi_p}= \sum_{{\xi_p}\in\mathcal{V}_{d-1}} w({\xi_p})\phi_{\xi_p}+\sum_{\ell=0}^{d-2}\mathsf{r}_\ell,
\end{equation}
where for any $\ell\in\{0,\dots,d-2\}$ we have
\begin{align}
\mathsf{r}_\ell\in\mathbb{Q}^{p}_0(\elem),\qquad 
\mathsf{r}_\ell\in\mathbb{Q}^{p}_0(\partial \elem_l)\qquad \forall l\in\{\ell+1,\dots,d-1\}.
\end{align}
Let us introduce $\mathcal{V}_{d-1,F}$ as the set of interior nodes of $F\subset\partial \elem$. For any ${\xi_p}\in\mathcal{V}_{d-1,F}$, by \eqref{defQ}, we have that 
\begin{equation}
w({\xi_p}) = \boldsymbol{\gamma}_F\cdot\jump[v]({\xi_p}),\qquad \boldsymbol{\gamma}_F:=\left\{\begin{aligned} &\n_{F,\elem} &&\textnormal{ if } F\subset\partial\Om,\\
\frac{1}{2}&\n_{F,\elem} &&\textnormal{ otherwise}.
\end{aligned}\right.
\end{equation} 
We the above notation, we have
\begin{align}
\sum_{{\xi_p}\in\mathcal{V}_{d-1}} w({\xi_p})\phi_{\xi_p} = \sum_{F\subset \partial \elem}\psi_F,\ \textnormal{where}\quad \psi_F := \boldsymbol{\gamma}_F\cdot\sum_{{\xi_p}\in\mathcal{V}_{d-1,F}}\jump[v]({\xi_p})\phi_{\xi_p}
\end{align}
We next observe that $\psi_F({\xi_p}) = 0$ for any ${\xi_p}\in\nodesI$, \emph{i.e.}, $\psi_F\in\mathbb{Q}_0^{p}(\elem)$, and $\psi_F({\xi_p}) = 0$ also for any ${\xi_p}\in\partial \elem\setminus F$. We can then apply the inverse trace inequality \eqref{invtrace}, thus obtaining
\begin{equation}
\normL[\psi_F][\elem][2]\lesssim \frac{h_\elem}{p^2}\normL[\psi_F][\partial\elem][2] \lesssim \frac{h_\elem}{p^2}\normL[\psi_F][F].
\end{equation}
Recalling \eqref{w}, it follows
\begin{equation}
\label{wEst}
\normL[w][\elem][2]\lesssim \sum_{F\subset \partial \elem} \frac{h_\elem}{p^2}\normL[\psi_F][F][2]+\sum_{\ell=0}^{d-2}\normL[\mathsf{r}_\ell][\elem][2].
\end{equation}
In order to bound the terms on the right hand side of \eqref{wEst}, we proceed by considering $d=1,2,3$ as separate cases. For $d=1$, inequality \eqref{wEst} reduces to 
\begin{equation}
\label{wd1}
\normL[w][\elem][2]\lesssim \sum_{F\subset \partial \elem} \frac{h_\elem}{p^2}\normL[\psi_F][F][2],
\end{equation}
and we observe that by the definition of $\psi_F$ it holds that $\psi_F|_F = \boldsymbol{\gamma}_F\cdot\jump[v]$. As a consequence, \eqref{wd1} implies \eqref{ineqQ}. For $d=2$, \eqref{wEst} reduces to
\begin{equation}
\label{wd2}
\normL[w][\elem][2]\lesssim \sum_{F\subset \partial \elem} \frac{h_\elem}{p^2}\normL[\psi_F][F][2]+\normL[\mathsf{r}_0][\elem][2].
\end{equation}
First of all, we recall that the function $\psi_F$ is equal to zero on $\partial F$ and coincides with $\boldsymbol{\gamma}_F\cdot \jump[v]$ on any ${\xi_p}\in \mcal[V][d-1,F]$, which means that $\psi_F-\boldsymbol{\gamma}_F\cdot \jump[v]\in \mathbb{Q}^{p}_0(F)$. By applying the inverse trace inequality \eqref{invtrace} and the trace inequality \eqref{trace} , we get
\begin{equation}
\label{psidiff}
\normL[\psi_F-\boldsymbol{\gamma}_F\cdot \jump[v]][F][2]\lesssim \frac{h_F}{p^2}\normL[\jump[v]][\partial F][2]\lesssim\frac{h_F}{p^2}\frac{p^2}{h_F}\normL[\jump[v]][F][2]\lesssim \normL[\jump[v]][F][2].
\end{equation}
From \eqref{psidiff} and the triangle inequality, it follows 
\begin{equation}
\label{psifinal}
\normL[\psi_F][F][2]\lesssim \normL[\jump[v]][F][2].
\end{equation}
We next estimate the term \normL[\mathsf{r}_0][2]. To this aim, we recall that $\mathsf{r}_0\in\mathbb{Q}^{p}_0(\elem)$ and $\mathsf{r}_0\in\mathbb{Q}^{p}_0(F)$. This allows us to apply the inverse trace inequality \eqref{invtrace} twice, thus obtaining
\begin{align}
\normL[\mathsf{r}_0][\elem][2]\lesssim \frac{h_\elem}{p^2}\sum_{F\subset\partial \elem}\normL[\mathsf{r}_0][F][2]\lesssim \frac{h_\elem}{p^2}\sum_{F\subset\partial \elem}\frac{h_F}{p^2}\normL[\mathsf{r}_0][\partial F][2].
\end{align}
Moreover, we note that, for $d=2$, $\partial F$ is given only by two nodes and for ${\xi_p}\in\partial F$, a simple calculation leads to
\begin{equation}
\label{r0xi}
\mathsf{r}_0({\xi_p}) = \sum_{F'\in\mcal[F][{\xi_p}]} \boldsymbol{\eta}_{F,F'}({\xi_p})\cdot\jump[v]({\xi_p}),
\end{equation}
where $\mcal[F][{\xi_p}]:=\{F'\in\face: {\xi_p}\in F'\}$ and 
\begin{equation}
\boldsymbol{\eta}_{F,F'}({\xi_p}) := \left\{\begin{aligned}\pm&\frac{1}{2}\n_{F'}\quad&&\textnormal{ if ${\xi_p}\in\partial\Om$},\\
 \pm&\frac{3}{8}\n_{F'}\quad&&\textnormal{ if $F'\subset \partial \elem\setminus\partial\Om$ and ${\xi_p}\not\in\partial\Omega$},\\
\pm&\frac{1}{8}\n_{F'}\quad&&\textnormal{ otherwise}.\end{aligned}\right.
\end{equation}
We then have
\begin{align}
\normL[\mathsf{r}_0][\partial F][2] =& \sum_{{\xi_p}\in\partial F}\sum_{F'\in\mcal[F][{\xi_p}]}|\boldsymbol{\eta}_{F,F'}({\xi_p})\cdot\jump[v]({\xi_p})|^2\lesssim \sum_{{\xi_p}\in\partial F}\sum_{F'\in\mcal[F][{\xi_p}]}|\jump[v]({\xi_p})|^2\\
\lesssim& \sum_{{\xi_p}\in\partial F}\sum_{F'\in\mcal[F][{\xi_p}]}\frac{p^2}{h_{F'}}\normL[\jump[v]][F'][2]\lesssim \sum_{F': F\cap F'\neq \emptyset}\frac{p^2}{h_{F'}}\normL[\jump[v]][F'][2],
\end{align}
where the second step follows by the trace inequality \eqref{trace}. Finally we obtain
\begin{align}
\label{r0final}
\normL[\mathsf{r}_0][\elem][2]\lesssim& \frac{h_\elem}{p^2}\sum_{F\subset\partial \elem}\frac{h_F}{p^2}\sum_{F': F\cap F'\neq \emptyset}\frac{p^2}{h_{F'}}\normL[\jump[v]][F'][2]\\
\lesssim& \frac{h_\elem}{p^2}\sum_{F\in \faceElem}\normL[\jump[v]][F][2].
\end{align}
By combining \eqref{psifinal} and \eqref{r0final} the desired result follows. Finally, for $d=3$, inequality \eqref{wEst} reduces to
\begin{equation}
\label{wd3}
\normL[w][\elem][2]\lesssim \sum_{F\subset \partial \elem} \frac{h_\elem}{p^2}\normL[\psi_F][F][2]+\normL[\mathsf{r}_0][\elem][2]+\normL[\mathsf{r}_1][\elem][2].
\end{equation}
The first two terms on the right hand side can be bounded reasoning as before. To estimate the last term on the right hand side, we first observe that $\mathsf{r}_1\in\mathbb{Q}^{p}_0(\elem)$ and $\mathsf{r}_1\in\mathbb{Q}^{p}_0(F)$, and therefore we can apply again \eqref{invtrace} twice and obtain
\begin{equation}
\label{r1init}
\normL[\mathsf{r}_1][\elem][2]\lesssim \frac{h_\elem}{p^2}\sum_{F\subset\partial \elem}\normL[\mathsf{r}_1][F][2]\lesssim \frac{h_\elem}{p^2}\sum_{F\subset\partial \elem}\frac{h_F}{p^2}\sum_{E\subset\partial F}\normL[\mathsf{r}_1][E][2].
\end{equation}
In analogy to the estimate regarding $\mathsf{r}_0$, cf. \eqref{r0xi}, the following result can be proved
\begin{equation}
\mathsf{r}_1|_E = \sum_{{\xi_p}\in\mcal[V][1,E]}  w({\xi_p})\phi_{\xi_p} = \sum_{{\xi_p}\in\mcal[V][1,E]}\sum_{F'\in\mcal[F][{\xi_p}]} \boldsymbol{\eta}_{F,F'}({\xi_p})\jump[v]({\xi_p})\phi_{\xi_p},
\end{equation}
being \mcal[V][1,E] the set of interior nodes of the edge $E\subset\partial F$ and $\mcal[F][E] := \{F\in \face: E\subset F\}$. We then write
\begin{align}
\normL[\mathsf{r}_1][E][2]&\lesssim \sum_{F'\in\mcal[F][E]}\Big\|\sum_{{\xi_p}\in\mathcal{V}_{1,E}}\jump[v]({\xi_p})\phi_{\xi_p}\Big\|_{L^2(E)}^2\\
&\lesssim \sum_{F'\in\mathcal{F}_E} \normL[\jump[v]][E][2]+\Big\|\sum_{{\xi_p}\in\mcal[V][1,E]}\jump[v]({\xi_p})\phi_{\xi_p}- \jump[v]\Big\|_{L^2(E)}^2,
\end{align}
and observe that 
\begin{align}
\sum_{{\xi_p}\in\mcal[V][1,E]}\jump[v]({\xi_p})\phi_{\xi_p}- \jump[v]\in\mathbb{Q}^{p}_0(E),\quad\sum_{{\xi_p}\in\mcal[V][1,E]}\jump[v]({\xi_p})\phi_{\xi_p}=0 \textnormal{ on }\partial E,
\end{align}
which implies by \eqref{invtrace} and \eqref{trace}
\begin{equation}
\label{r1diff}
\Big\|\sum_{{\xi_p}\in\mcal[V][1,E]}\jump[v]({\xi_p})\phi_{\xi_p}- \jump[v]\Big\|_{L^2(E)}^2\lesssim\frac{h_E}{p^2}\sum_{{\xi_p}\in\partial E}|\jump[v]({\xi_p})|^2 \lesssim \normL[\jump[v]][E][2],
\end{equation}
hence,
\begin{equation}
\label{r1mid}
\normL[\mathsf{r}_1][E][2]\lesssim \sum_{F'\in\mcal[F][E]} \normL[\jump[v]][E][2].
\end{equation}
From \eqref{r1init}, \eqref{r1mid} and \eqref{trace}, we finally obtain
\begin{align}
\label{r1final}
\normL[\mathsf{r}_1][\elem][2]\lesssim& \frac{h_\elem}{p^2}\sum_{F\subset\partial \elem}\frac{h_F}{p^2}\sum_{E\subset\partial F}\normL[\mathsf{r}_1][E][2]\\
\lesssim& \frac{h_\elem}{p^2}\sum_{F\subset\partial \elem}\frac{h_F}{p^2}\sum_{E\subset\partial F} \sum_{F'\in\mcal[F][E]} \normL[\jump[v]][E][2]\\
\lesssim& \frac{h_\elem}{p^2}\sum_{F\in\faceElem} \normL[\jump[v]][F][2].
\end{align}
which combined with the analogous result for $\mathsf{r}_0$ and the bound on the norm of $\psi_F$, gives the thesis.
}
\section*{Acknowledgements}
Part of this work was developed during the visit of the second author
at the Pennsylvania State University. Special thanks go to the Center
for Computational Mathematics and Applications (CCMA) at the
Mathematics Department, Penn State for the hospitality and support.
The work of the fourth author was supported in part by NSF
DMS-1217142, NSF DMS-1418843, and Lawrence Livermore National
Laboratory through subcontract B603526.

\bibliographystyle{siam}
\bibliography{biblio}

\end{document}